\documentclass{article}
\usepackage{arxnotes}
\let\wtilde\widetilde
\usepackage{url}

\makeop{HS}
\title{Triviality of arc closures and the local isomorphism problem}
\author{Devlin Mallory\thanks{The author was supported by NSF Graduate Research Fellowship grant DGE-1256260, as well as partially supported by NSF grant DMS-1701622.}}
\def\ac{^{\mathrm{ac}}}
\def\jc#1{^{#1-\mathrm{jc}}}
\def\asc{^{\mathrm{asc}}}
\def\jsc#1{^{#1-\mathrm{jsc}}}
\def\jscn{^{\mathrm{jsc}}}
\def\a{\mathfrak a}
\def\m{\mathfrak m}
\def\n{\mathfrak n}
\DeclareMathOperator{\Length}{length}

\theoremstyle{theorem}
\newtheorem{thm}{Theorem}
\numberwithin{thm}{section}
\newtheorem{lem}[thm]{Lemma}
\newtheorem{cor}[thm]{Corollary}
\newtheorem{prop}[thm]{Proposition}
\theoremstyle{definition}
\newtheorem{dfn}[thm]{Definition}
\newtheorem{exa}[thm]{Example}
\newtheorem{rem}[thm]{Remark}
\def\mN{\m_N}
\begin{document}
\maketitle
\abstract{\small We give an answer  in the ``geometric'' setting to a question of \cite{DEI} asking when local isomorphisms of $k$-schemes can be detected on the associated maps of local arc or jet schemes. In particular, we show that their ideal-closure operation $\a\mapsto \a\ac$ (the arc-closure) on a local $k$-algebra $(R,\m,L)$  is trivial when $R$ is Noetherian and $k\hookrightarrow L$ is separable, and thus that such a  germ $\Spec R$ has the (embedded) local isomorphism property.
}

\section{Introduction}

Let $k$ be any field.
Given a $k$-scheme $X$,  morphisms $\Spec k[t]/t^{\ell+1} \to X$ (i.e., $\ell$-jets) or $\Spec k[[t]]\to X$ (i.e., arcs)  are parametrized 
by schemes  $J_\ell(X)$ and $J_\infty(X)$, the $\ell$-jet and arc schemes. 
The arc and jet schemes encapsulate a great deal of information about $X$. They're central to the theory of motivic integration, which allowed Kontsevich to show the birational invariance of the Hodge numbers of Calabi--Yau varieties \cite{Kontsevich} and since then has been applied  to the study of various motivic invariants (see, e.g., \cite{motivic1,motivic2}). In particular, this has led to connections between singularities of the minimal model program and arc schemes (see \cite{Mircea2}).
In a somewhat different direction, they are related further to singularities  through  the study of the Nash blow-up and Mather--Jacobian discrepancy (see 
\cite{IshiiReguera,NashBlow}).

There are morphisms $\pi_\infty\colon J_\infty(X)\to X$ and $\pi_\ell\colon J_\ell(X)\to X$, 
given by sending an arc $\Spec k[[t]]\to X$ to the image of the closed point of $\Spec k[[t]]$ in $X$, and likewise for $\ell$-jets.
Given a point $x$ of $X$, one defines $J_\infty(X)_x:=\pi_\infty\inv(x)$ and $J_\ell(X)_x:=\pi_\ell\inv(x)$,  the arcs or $\ell$-jets based at $x\in X$. 
Given a morphism of $k$-schemes $f\colon X\to Y$, we obtain morphisms
$f_\ell\colon J_\ell(X)\to J_\ell(Y)$ and $f_\infty\colon  J_\infty(X)\to J_\infty(Y)$, defined on $k$-points by sending an arc $\Spec k[[t]]\to X$ on $X$ to the arc $\Spec k[[t]]\to X\to Y$ on $Y$;
these 
restrict to morphisms
$\bar f_\ell\colon J_\ell(X)_x\to J_\ell(Y)_{f(x)}$ and $\bar f_\infty\colon  J_\infty(X)_x\to J_\infty(Y)_{f(x)}$.

In 
\cite{DEI},
de Fernex, Ein, and Ishii 
 considered the question of how much local information about $f$ is captured by the morphisms $\bar f_\ell\colon J_\ell(X)_x\to J_\ell(Y)_{f(x)}$ or $\bar f_\infty\colon  J_\infty(X)_x\to J_\infty(Y)_{f(x)}$. More precisely, they asked the following question: 

\begin{quest}[local isomorphism problem]
If the morphisms $\bar f_\ell\colon J_\ell(X)_x\to J_\ell(Y)_{f(x)}$ are isomorphisms for all $\ell$ (including $\ell=\infty$), is $f$ an isomorphism at $x$, i.e., does $f$ induce an isomorphism of local rings $\O_{Y,f(x)}\to \O_{X,x}$? 
\end{quest}

The question is local on $X$ and $Y$, so we can restrict the setting to where $X,Y$ are spectra of local rings and $x,y=f(x)$ are the closed points; we call such a pair $(X,x)$ a germ.

The article \cite{DEI} also considers the following variant
(and shows that this is equivalent to the original question when $X$ is locally Noetherian):

\begin{quest}[embedded local isomorphism problem]
If we assume furthermore that $f$ is a closed embedding of germs, does the above question  have a positive answer?
\end{quest}


In order to understand the embedded version of the question, de Fernex, Ein, and Ishii introduce the arc closure, which is a closure operation $\a \mapsto \a \ac$ on ideals of a local $k$-algebra $R$ defined using the jet schemes of $\Spec R$. They then show that arc-closure of the zero ideal (the equality $(0)=(0)\ac$) for a ring $R$   is equivalent to a positive answer to the embedded local isomorphism problem for morphisms to $(\Spec R,\Spec(R/\m))$.
They furthermore give an example of a (non-Noetherian) $k$-algebra $R$ in which the zero ideal is not arc-closed, and thus in which the embedded local isomorphism property does not hold, suggesting that some restrictions on $R$ are necessary to ensure a positive answer.

In this paper, we show that this closure operation is trivial for Noetherian local $k$-algebras $(R,\m,L)$ for which $k\hookrightarrow L$ is separable, and thus that such germs have the embedded local isomorphism property:

\begin{thm}
If $(R,\m,L)$ is a Noetherian local $k$-algebra with residue field $L$, $k\hookrightarrow L$ is separable, and $\a $ is a proper ideal of $R$, then $\a \ac =\a $.
\end{thm}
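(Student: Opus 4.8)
The plan is to make a sequence of standard reductions and then reduce the whole statement to a nonvanishing assertion about a single coefficient of a jet, which is transferred from the associated graded ring by a degeneration argument.

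First the reductions. Since $\a$ is proper, $R/\a$ is again a Noetherian local $k$-algebra with residue field $L$ and $k\hookrightarrow L$ still separable, and unwinding the definition shows $\a\ac/\a$ is the arc-closure of $(0)$ in $R/\a$; so it suffices to prove $(0)\ac=(0)$. An arc of $\Spec\hat R$ based at the closed point restricts to one of $\Spec R$ based at $x$ (the residue field and the ``based-at-$x$'' condition being unchanged), and $\bigcap_n\m^n=(0)$ by Krull, so we may assume $R$ is $\m$-adically complete. Being equicharacteristic, $R$ then has, by the refinement of the Cohen structure theorem using separability of $k\hookrightarrow L$, a coefficient field $L$ containing $k$; thus $R\cong L[[x_1,\dots,x_N]]/I$ as a $k$-algebra. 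Since an $L$-rational jet based at $x$ is in particular a $k$-rational one, $(0)\ac$ computed over $k$ is contained in the arc-closure computed using only $L$-algebra homomorphisms, so from now on we work over $L$. Finally, given a fixed nonzero $h\in R$, Krull's theorem gives $n$ with $h\notin\m^n$; any jet of $\Spec(R/\m^n)$ based at $x$ is a jet of $\Spec R$ not killing $h$ as soon as it does not kill $\bar h\neq0$ in $R/\m^n$, so we may replace $R$ by $R/\m^n$ and assume $R$ is a finite-dimensional local $L$-algebra with coefficient field $L$ and $\m^n=0$.

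Now $h\in(0)\ac$ if and only if the universal arc of $\Spec R$ based at $x$ kills $h$, i.e.\ the ``$m$-th jet'' $\tilde h^{(m)}$ of $h$ vanishes in $\O(J_\ell(\Spec R)_x)$ for every $m\le\ell$ and every $\ell$. Write $R=L[\underline x]/J$ with $J\subseteq(\underline x)^2$ (take $\underline x$ a minimal generating set of $\m$), pick a representative of $h$ all of whose terms have degree $\ge d:=\mathrm{ord}_\m(h)$, and expand $h(\sum_{p\ge1}x_i^{(p)}t^p)$: one reads off that $\tilde h^{(m)}=0$ for $m<d$ and that $\tilde h^{(d)}$ is precisely the leading form $h_d$ of $h$ evaluated at the first-jet variables $x_i^{(1)}$. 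So it suffices to prove $h_d(x_1^{(1)},\dots,x_N^{(1)})\neq0$ in $\O(J_\ell(\Spec R)_x)$ for $\ell\ge d$.

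This is visible on the normal cone. The extended Rees algebra $R[\m t^{-1},t]\subseteq R[t,t^{-1}]$ gives a flat family over $\Spec L[t]$ whose fibre over $t\neq0$ is $\Spec R$ (up to the factor $\Spec L[t,t^{-1}]$) and whose fibre over $t=0$ is $\Spec C$ with $C=\mathrm{gr}_\m(R)$; applying the jet-scheme-based-at-the-vertex functor yields a degeneration of $\O(J_\ell(\Spec R)_x)$ to $\O(J_\ell(\Spec C)_x)$ under which the function $h_d(x^{(1)})$ specializes to the analogous function for $C$, the leading form of $h$ (now an element of $C$) evaluated at the first-jet variables. Granting that this jet-scheme family is again flat over $\Spec L[t]$, the variable $t$ is a nonzerodivisor on its total coordinate ring, so a function nonzero in the special fibre is nonzero in the generic one; hence it is enough to show the specialized function is nonzero in $\O(J_\ell(\Spec C)_x)$. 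But $C$ is graded with $\m_C$ generated in degree one, so the tautological assignment $\bar x_i\mapsto\bar x_i t$ defines a genuine (indeed injective) arc $C\to C[[t]]$ based at the vertex; the corresponding $C$-point of $J_\ell(\Spec C)_x$ sends $h_d(x^{(1)})$ to $h_d(\bar x_1,\dots,\bar x_N)\in C$, which is the leading form of $h$ and hence nonzero by the choice of $d$. Therefore $h_d(x^{(1)})\neq0$ in $\O(J_\ell(\Spec C)_x)$, hence in $\O(J_\ell(\Spec R)_x)$, and so $h\notin(0)\ac$, which is what we wanted.

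The step I expect to be the crux is the flatness of the jet scheme of the normal-cone degeneration, since jet schemes do not in general preserve flatness. Concretely this amounts to the assertion that if $\{f_j\}$ is a standard basis of $J$ then the jets $\{\tilde f_j^{(m)}\}$ form a standard basis of the jet ideal with respect to the weight counting the number of jet-variable factors, so that the associated graded of $\O(J_\ell(\Spec R)_x)$ along that filtration is exactly $\O(J_\ell(\Spec C)_x)$; once this is in hand the remainder of the argument is formal.
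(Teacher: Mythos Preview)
Your reductions to the Artinian case and the identification of the $d$-th coefficient $\tilde h^{(d)}$ with the leading form $h_d(x^{(1)})$ are fine. The gap is exactly where you flagged it: the jet-scheme family over the Rees degeneration is \emph{not} flat in general, and the reformulation you propose (that jets of a standard basis of $J$ form a standard basis of the jet ideal for the polynomial-degree weight) is false.

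Take $R=L[x,y]/(x^2-y^3,\,xy)$, an Artinian local $L$-algebra with $\m^4=0$. A standard basis of $J=(x^2-y^3,xy)$ is $\{x^2-y^3,\,xy,\,y^4\}$, so $C=\mathrm{gr}_\m R=L[X,Y]/(X^2,XY,Y^4)$. The Rees family is $L[s][x,y]/(x^2-sy^3,\,xy,\,y^4)$. For $\ell=3$ the based jet ideal of this family is generated by
\[
x_1^2,\quad 2x_1x_2-sy_1^3,\quad x_1y_1,\quad x_1y_2+x_2y_1
\]
(the relation $y^4$ contributes nothing in orders $\le 3$). Now compute
\[
y_1\cdot(2x_1x_2-sy_1^3)-2x_2\cdot(x_1y_1)=-s\,y_1^4,
\]
so $s\,y_1^4=0$ in $\O(\mathcal J_3)$. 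But modulo $s$ the ring is
$L[X_i,Y_i]/(X_1^2,\,2X_1X_2,\,X_1Y_1,\,X_1Y_2+X_2Y_1)$, which surjects onto $L[Y_1,Y_2,Y_3]$ by $X_i\mapsto 0$; hence $Y_1^4\ne 0$ there, so $y_1^4\ne 0$ in $\O(\mathcal J_3)$. Thus $s$ is a zerodivisor and the family is not flat. Equivalently, the $S$-pair of $2x_1x_2-y_1^3$ and $x_1y_1$ in $\O(J_3(\Spec R)_x)$ produces the new leading term $y_1^4$, so the jets of your standard basis are not a standard basis of the jet ideal, and $\mathrm{gr}\,\O(J_\ell(\Spec R)_x)\ne\O(J_\ell(\Spec C)_x)$. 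Your implication ``nonzero in the special fibre $\Rightarrow$ nonzero in the generic fibre'' therefore has no footing.

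The paper avoids comparison with $\mathrm{gr}_\m R$ altogether. After reducing to the Artinian case it further reduces (by induction on length, via persistence under quotients by socle elements) to $R$ Gorenstein Artinian, then uses Matlis duality to obtain an inclusion of \emph{modules} $R\hookrightarrow S/(x_1^N,\dots,x_n^N)$ over the graded complete intersection $S/(x_1^N,\dots,x_n^N)$. An arc-closure for submodules, together with a comparison lemma for restriction of scalars, reduces the problem to the arc-closedness of $(0)$ in the graded ring $S/(x_1^N,\dots,x_n^N)$, which is the known graded case. The key maneuver is thus an embedding of modules into a fixed graded ring rather than a degeneration of rings, sidestepping precisely the flatness obstruction your approach runs into.
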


In particular, this holds when $\Char k=0$, when $k$ is perfect, or when $L=k$, and thus holds in the cases of primary geometric interest.

\begin{cor}
Such germs have the embedded local isomorphism property.
\end{cor}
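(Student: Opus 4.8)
First I would unwind the definition of the arc-closure: $\a\ac$ is the preimage in $R$ of the kernel of the ``universal arc'' $R/\a\to\O_{J_\infty(\Spec R/\a)_x}[[t]]$, $\bar r\mapsto\sum_{j\ge0}\bar r^{(j)}t^j$, where $\bar r^{(j)}\in\O_{J_\infty(\Spec R/\a)_x}=\varinjlim_\ell\O_{J_\ell(\Spec R/\a)_x}$ is the $j$-th jet function of $\bar r$. Since $\a\subseteq\a\ac$ holds for any closure operation, the theorem amounts to the vanishing of this kernel; and as $R/\a$ is again a Noetherian local $k$-algebra with residue field $L$, it suffices to prove: for every Noetherian local $k$-algebra $(S,\m_S,L)$ with $k\hookrightarrow L$ separable, the universal arc $S\to\O_{J_\infty(\Spec S)_x}[[t]]$ is injective --- equivalently, every $0\neq g\in S$ has some jet function $g^{(j)}\neq0$ in $\O_{J_\infty(\Spec S)_x}$.

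Next, given $0\neq g\in S$, I would use Krull's intersection theorem (valid since $S$ is Noetherian local) to get $\bigcap_\ell\m_S^\ell=(0)$, so that $a:=\mathrm{ord}_{\m_S}(g)$ is finite and the initial form $\mathrm{in}(g)$ is a nonzero element of the degree-$a$ piece of $\mathrm{gr}_{\m_S}S$. The key claim is then that $g^{(a)}\neq0$ already in $\O_{J_\ell(\Spec S)_x}$ for \emph{every} $\ell\ge a$; this forces $g^{(a)}\neq0$ in the colimit $\O_{J_\infty(\Spec S)_x}$ and completes the proof. Since an $\ell$-jet based at $x$ factors through the $\ell$-th infinitesimal neighborhood $\Spec S/\m_S^{\ell+1}$, one has $\O_{J_\ell(\Spec S)_x}=\O_{J_\ell(\Spec S/\m_S^{\ell+1})_x}$ and $g^{(a)}$ is computed inside the Artinian ring $\bar S:=S/\m_S^{\ell+1}$, so this reduces to an Artinian statement: for $\bar S$ Artinian local with $\m_{\bar S}^{\ell+1}=0$ and residue field $L$ separable over $k$, and $\bar g\in\bar S$ the image of $g$, which is nonzero and of $\m_{\bar S}$-adic order $a$, one has $\bar g^{(a)}\neq0$ in $\O_{J_\ell(\Spec\bar S)_x}$.

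This is the only place the separability hypothesis is needed. Since $\bar S$ is complete (being Artinian) and $k\hookrightarrow L$ is separable, Cohen's structure theorem yields a coefficient field $\Lambda\cong L$ with $k\subseteq\Lambda\subseteq\bar S$; hence $\bar S$ is a finite-dimensional $\Lambda$-algebra, which I would present as $\bar S=\Lambda[z_1,\dots,z_d]/I$ with $(z_1,\dots,z_d)^{\ell+1}\subseteq I$. A direct computation of the jet scheme then identifies $\O_{J_\ell(\Spec\bar S)_x}$ with a quotient of the polynomial ring $\Lambda[z_i^{(j)} : 1\le i\le d,\, 1\le j\le\ell]$ --- where $z_i^{(j)}$ is given weight $j$ --- by the weighted-homogeneous ideal $\tilde I$ generated by the jet expansions $g^{(1)},\dots,g^{(\ell)}$ of the generators $g$ of $I$, and a short computation shows that $\bar g^{(a)}$ is exactly $\mathrm{in}(\bar g)$ with each $z_i$ replaced by $z_i^{(1)}$. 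Thus $\bar g^{(a)}\neq0$ is precisely the injectivity of the natural ring homomorphism $\psi\colon\mathrm{gr}_{\m_{\bar S}}\bar S\to\O_{J_\ell(\Spec\bar S)_x}$, $z_i\mapsto z_i^{(1)}$. I expect this injectivity to be the main obstacle: concretely, that the weight-$a$ part of $\tilde I$ meets $\Lambda[z_1^{(1)},\dots,z_d^{(1)}]$ only in the degree-$a$ part of the initial ideal of $I$, i.e.\ that the higher jet variables $z_i^{(j)}$ ($j\ge2$) impose no new relations among the $z_i^{(1)}$. I would try to prove it by constructing an explicit $\Lambda$-algebra retraction of $\psi$, or equivalently by writing down directly a suitable --- and, in general, necessarily non-reduced --- $\Lambda$-algebra $A$ together with an $\ell$-jet $\bar S\to A[t]/t^{\ell+1}$ patterned on the $\m_{\bar S}$-adic filtration of $\bar S$, for which $\gamma(\bar g)$ visibly has nonzero coefficient in degree $a$; already $\bar S=L[z]/(z^2)$, $\bar g=z$ (where no jet with reduced target detects $z$, yet $z\mapsto\varepsilon t$ into $(L[\varepsilon]/(\varepsilon^2))[t]/(t^2)$ does) shows that non-reduced targets cannot be avoided. (When $L/k$ is transcendental one must also track the extra jet variables contributed by $\Lambda/k$ itself, but separability makes these harmless.)

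The Corollary is then immediate: taking $\a=(0)$, which is a proper ideal since $R\neq0$, the Theorem gives $(0)\ac=(0)$, and by the result of \cite{DEI} recalled in the introduction the equality $(0)=(0)\ac$ is equivalent to the embedded local isomorphism property of the germ $(\Spec R,\Spec R/\m)$.
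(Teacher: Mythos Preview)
Your final paragraph is the entire proof the Corollary requires: once the Theorem is in hand, taking $\a=(0)$ gives $(0)\ac=(0)$, and \cite[Proposition~5.1]{DEI} converts this into the embedded local isomorphism property. The paper treats the Corollary as exactly this immediate consequence.

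The preceding three paragraphs, however, re-attempt a proof of the Theorem itself, and that attempt has a real gap. You correctly reduce to the Artinian case and, via Cohen's theorem, to $\bar S=\Lambda[z_1,\dots,z_d]/I$; you also correctly compute that the $t^a$-coefficient of the universal based jet of $\bar g$ is $\mathrm{in}(\bar g)$ evaluated at the $z_i^{(1)}$. But the nonvanishing of this element in $\O_{J_\ell(\Spec\bar S)_x}$---equivalently, the injectivity of your map $\psi$---is precisely the hard part, and you only sketch strategies for it (``I would try to prove it by\dots''). The obvious candidate retraction ($z_i^{(1)}\mapsto\bar z_i$, $z_i^{(j)}\mapsto 0$ for $j\geq 2$) does not work: it sends $f^{(j)}$ to the degree-$j$ homogeneous piece $f_j$ of $f\in I$, and $f_j$ need not lie in $\mathrm{in}(I)$ when $I$ is not homogeneous. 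The paper's proof of the Theorem sidesteps this difficulty entirely by a different mechanism: after the Artinian reduction it inducts on length to reduce to $\bar S$ \emph{Gorenstein} Artinian, then uses Matlis duality to produce an $S/\m_N$-\emph{module} embedding $\bar S\hookrightarrow S/\m_N$ with $\m_N=(z_1^N,\dots,z_d^N)$; since $S/\m_N$ is graded, arc-closedness of its zero ideal (from \cite[Theorem~5.8(a)]{DEI}) pulls back to $\bar S$ via a module-level persistence argument developed for this purpose.
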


The strategy is relatively simple:
We proceed by reducing first to showing that $\a \ac=\a $ for $\a $ an $\m$-primary ideal, and then by induction on the length of $R/\a $ to the case where $R/\a $ is a Gorenstein Artinian local $k$-algebra. At that point, 
 we obtain an inclusion of $R'$-modules $R/\a \hookrightarrow R'$ via the Matlis dual (\emph{not} an inclusion of rings!)
for a suitable \emph{graded} Gorenstein Artinian local $k$-algebra $R'$.
This step uses the Cohen structure theorem, and requires that $R/\a $ has a coefficient field $L_0\cong L$ containing $k$, which is where the assumption on separability of $k\hookrightarrow L$ comes in.
 This inclusion of modules necessitates the introduction and analysis of an arc-closure operation defined on submodules of modules; once a few elementary properties are shown, we use that the arc-closedness of the zero ideal of $R'$, as shown in \cite[Theorem~5.8(a)]{DEI}, to conclude that the zero ideal of $R/\a $ must be arc-closed as well.

The organization of the paper is as follows: In 
Section~\ref{arcs} we review the basic definitions of arc and jet schemes and the local isomorphism problem, and  
in Section~\ref{closures} we recall the definition of arc and jet closures and their basic properties from \cite{DEI}.
In 
Section~\ref{modules} we generalize the definition of arc and jet closures to 
closures of
submodules of modules and prove some elementary properties about these operations under module maps and restrictions of scalars along a ring quotient.
Section~\ref{proof} contains the core of our proof, and
Section~\ref{questions} has a few observations on further questions on the subject.

\subsection*{Acknowledgements}
I am much indebted to the work of de Fernex, Ein, and Ishii on the local isomorphism problem, and thank them for their lovely reformulation of the problem in \cite{DEI}.
I also would like to thank Monica Lewis for fruitful conversations regarding closure operations, and 
my advisor
Mircea Musta\c{t}\u{a} for helpful discussions and advice.
I am grateful as well for very helpful feedback on previous versions of this paper
from
 Mircea Musta\c{t}\u{a} and Ilya Smirnov.

\section{Arc and jet schemes}
\label{arcs}

We recall the definition of arc and jet schemes; for a comprehensive treatment see, e.g., \cite{Vojta}.
Let $X$ be a scheme over $k$, and for each $\ell \in \N$ consider the functor from $k$-schemes to sets 
$$T\mapsto \Hom\bigl(T\times_k \Spec(k[t]/t^{\ell+1}),X\bigr).$$ 

One can show that there is a $k$-scheme $J_\ell(X)$, the scheme of $l$-jets of $X$, representing this functor, i.e., such that
$$
\Hom\bigl(T\times_k \Spec(k[t]/t^{\ell+1}), X\bigr) =\Hom(T,J_\ell(X));
$$
in particular, $k$-points of $J_\ell(X)$ correspond to maps $\Spec k[t]/t^{\ell+1}\to X$.
Moreover, if $X$ is finite-type over $k$ then so is $J_\ell(X)$.

The quotient maps 
$$k[t]/t^{\ell+1}\to k[t]/t^{\ell'+1}$$ for $\ell'<\ell$ induce morphisms $$\pi_{\ell,\ell'}\colon J_\ell(X)\to J_{\ell'}(X).$$
It's easily verified that these maps are affine, 
and thus the inverse limit  over the system $\set{J_\ell(X)\to J_{\ell'}(X):\ell>\ell'}$ exists in the category of $k$-schemes. We denote this limit by $J_\infty(X)$, and call it the arc scheme of~$X$ (note that $J_\infty(X)$ will not be finite-type over $k$ in general). 
$J_\infty(X)$ is equipped with canonical truncation maps $\pi_{\infty,\ell}\colon J_\infty(X)\to J_l(X)$ for any $\ell$, arising from the quotient morphism
$$k[[t]]\to k[t]/t^{\ell+1}.$$

\begin{rem}
One can check that if $k\hookrightarrow L$ is a field extension then
$$
\let\big\relax
\Hom\bigl(\Spec(L[[t]]), X\bigr)
=
\Hom(\Spec (L), J_\infty(X)).
$$
In fact,
by \cite{Bhargav}
it is true (but highly nonelementary)
that
 if $X$ is quasicompact and quasiseparated over $k$ and
$S$ is a $k$-algebra 
then
$$
\Hom\bigl(\Spec S\times_k \Spec(k[[t]]/t^{\ell+1}), X\bigr) =\Hom(\Spec S,J_\ell(X)),
$$
but we do not use this in the following.
\end{rem}

In the following, we use $\ell$ to denote an element of $\N\cup \set\infty$, and write $k[[t]]/t^{\ell+1}$ to mean either $k[[t]]/t^{\ell+1}=k[t]/t^{\ell+1}$ when $\ell$ is finite or $k[[t]]$ when $\ell=\infty$.

Recall that for any $\ell$ we have truncation maps $\pi_{\ell,0}\colon  J_\ell(X)\to J_0(X)=X$, which we denote simply by $\pi_\ell$; 
at the level of $k$-points,
this just sends an arc $\Spec k[[t]]/t^{\ell+1} \to \Spec X$ to
$$\Spec k\to \Spec k[[t]]/t^{\ell+1} \to X,$$ i.e., to the image of the closed point of $\Spec k[[t]]/t^{\ell+1}$.
For a point $x\in X$, not necessarily closed,
we write $J_\ell(X)_x$ for $\pi_\ell\inv(x)$, the fiber over $x$.

Given a morphism $f\colon X\to Y$ of $k$-schemes, for any morphism 
$$T\times_k \Spec(k[[t]]/t^{\ell+1})\to X$$
 we obtain a morphism
$$T\times_k \Spec(k[[t]]/t^{\ell+1})\to X\to Y,$$
and 
by functoriality we obtain morphisms $f_\ell\colon J_\ell(X)\to J_\ell(Y)$ for all $\ell$.
Furthermore, it's clear that for $x\in X$ these morphisms restrict to morphisms
$ \bar f_\ell\colon J_\ell(X)_x\to J_\ell(Y)_{f(x)} $.

We now have the language to state the motivating questions of \cite{DEI}:

\begin{quest}[local isomorphism problem]
Given a map $f\colon X\to Y$ and $x\in X$,
if all the morphisms $\bar f_\ell\colon  J_\ell(X)_x\to J_\ell(Y)_{f(x)}$ are isomorphisms (including $\ell=\infty$), is $f$ a local isomorphism at $x$, i.e., does $f$ induce an isomorphism of local rings $\O_{Y,f(x)}\to \O_{X,x}$? 
\end{quest}

\begin{quest}[embedded local isomorphism problem]
If we assume furthermore that $f$ is a closed embedding, does the above question  have a positive answer?
\end{quest}

As remarked previously, these questions are local on source and target of the morphism, so we may assume $X$ and $Y$ are spectra of local rings with closed points $x,y$ respectively; we refer to such a pair $(X,x)$ or $(Y,y)$ as a germ; if $(R,\m)$ is a local ring, we'll refer to $(\Spec R,\Spec (R/\m))$ simply as the germ $\Spec R$ when no confusion will occur.
We say a germ $(Y,y)$ has the  local isomorphism property (respectively, the embedded local isomorphism property) if the  local isomorphism problem (respectively, the embedded local isomorphism problem) has an affirmative answer for all maps of germs $(X,x)\to (Y,y)$.

\begin{rem}
As noted in \cite[Proposition 2.6, Lemma 2.7]{DEI}, if $Y$ has the embedded local isomorphism property then the local isomorphism problem has an affirmative answer for maps $(X,x)\to (Y,y)$ with $X$ locally Noetherian; thus, for most cases of geometric interest it suffices to consider just the embedded form of the problem.
\end{rem}

Now, we recall the local construction of the arc and jet schemes of an affine scheme $\Spec R$ (the arc and jet schemes are obtained by simply gluing the construction over affine charts).
Given a $k$-algebra $R$ (not necessarily Noetherian or local), and $\ell\in \N\cup\set\infty$, we write $R_\ell$ for the ring of Hasse--Schmidt derivations $\HS^\ell_{R/k}$.
If $\ell < \infty$,
for any $k$-scheme $T$ we have
$$\Hom(T,\Spec R_\ell)=\Hom(T\times_k \Spec(k[[t]]/t^{\ell+1}),X),
$$
so that
 $\Spec R_\ell$ is canonically isomorphic to $J_\ell(\Spec R)$; moreover, one can check that $\Spec R_\infty \cong J_\infty(\Spec R)$.

For an ideal $I$ of $R$ we write $I_\ell$ for the ideal of $R_\ell$ generated by $D_i(f)$ for $0\leq i< \ell+1$ and $f\in I$, where $D_i$ are the universal Hasse--Schmidt derivations $R\to R_\ell$.

\begin{exa}
If 
$R=k[x_1,\dots,x_n]$
then
$$R_\ell=k\bigl[x_i^{(j)}: i=1,\dots,n, 0\leq j \leq \ell\bigr];$$ 
one can think of a point $(a_i^{(j)})$ of $\Spec R_\ell\cong \A^{n(\ell+1)}$ as parametrizing the arc
$$
\Spec k[t]/t^{\ell+1} \to \A^n,\quad t\mapsto \biggl(
\sum a_1^{(i)} t^i,
\dots,
\sum a_n^{(i)} t^i\biggr).
$$
The Hasse--Schmidt derivations can be defined 
by setting
$$
D_i(x_j)=x_j^{(i)}
$$
and extending via the Leibniz rule $D_m(fg)=\sum_{i+j=m} D_i(f)D_j(g)$, so, e.g., if we write $x=x_1$, $y=x_2$, we have
$$D_2(xy)=D_2(x)D_0(y)+2D_1(x)D_1(y)+D_0(x)D_2(y)=
x^{(2)}y^{(0)}
+
2x^{(1)}y^{(1)}
+
x^{(0)}y^{(2)}.
$$
\end{exa}

The projection maps $\pi_{\ell,\ell'}$ for $\ell>\ell'$ give
ring maps $R_{\ell'} \to R_{\ell}$, and in particular
an inclusion $R=R_0 \hookrightarrow R_\ell$ for any $\ell$.

\section{Arc and jet closures}
\label{closures}
Now, say $(R,\m)$ is a local $k$-algebra,
and write $\m R_\ell$ for the expansion of $\m\subset R$ to $R_\ell$ under the ring map $R\to R_\ell$.
The following definition is key to the reduction in \cite{DEI} of the embedded local isomorphism problem to a ring-theoretic question:

\begin{dfn}[\cite{DEI}]
\label{defn}
For an ideal $\a $ of $R$ and $\ell<\infty$, define the $\ell$-jet closure of $\a$ as 
$$\a \jc \ell=\bigl( f\in R: (f)_\ell \subset \a_\ell + \m R_\ell \bigr),$$
and
for $\ell=\infty$, define the arc closure of $\a$ as
$$\a \ac =\bigl( f\in R: (f)_\infty \subset \a_\infty + \m R_\infty \bigr).$$
\end{dfn}

Geometrically, $\a \jc \ell$ is the largest ideal of $R$ whose higher differentials define the same closed subscheme in  $J_\ell(\Spec R)_{\Spec R/\m}$ (the fiber over the closed point of $R$) as that defined by the higher differentials of $\a $.

\begin{exa}
It's immediately seen that the $\a \jc \ell$ are nontrivial closure operations; for example, by the Leibnitz rule it's easily seen that if $f\in \m^{\ell+1}$ then $D_\ell(f)\in \m R_\ell$, so that $\a +\m^{\ell+1}\subset \a \jc \ell$. For an example showing that this is in general a proper inclusion, see \cite[Example~3.11]{DEI}.
\end{exa}

The following shows that we can compute these closures in the quotient ring $R/\a $:

\begin{lem}[{\cite[Lemma 3.2]{DEI}}]
\label{hered}
Let $\a \subset \m$.
For all $\ell$, if $\pi\colon  R\to R/\a $, then
$$
\a \jc \ell = \pi\inv((0_{R/\a})\jc \ell),
$$
and similarly 
$$
\a \ac = \pi\inv((0_{R/\a})\ac ).
$$
\end{lem}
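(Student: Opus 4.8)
The plan is to reduce both equalities to the single fact that forming jet rings commutes with passing to a quotient, after which only formal manipulation of ideals under a surjection with known kernel remains. Throughout set $S=R/\a$, let $\pi\colon R\to S$ be the quotient map, and let $\bar\m=\m S=\m/\a$ be the maximal ideal of $S$ (which is again a local $k$-algebra, since $\a\subset\m$).

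The one substantive input I would invoke is this: for every $\ell\in\N\cup\{\infty\}$ there is a surjection of $k$-algebras $\pi_\ell\colon R_\ell\to S_\ell$ with kernel exactly $\a_\ell$, compatible with the universal Hasse--Schmidt derivations in the sense that $\pi_\ell\circ D_i^R=D_i^S\circ\pi$ for all $i$; equivalently, the closed immersion $\Spec S\hookrightarrow\Spec R$ induces on $\ell$-jet schemes the closed immersion $J_\ell(\Spec S)\hookrightarrow J_\ell(\Spec R)$ cut out by $\a_\ell$. For finite $\ell$ this is immediate from the universal property of $\HS^\ell_{R/k}$: a $k$-algebra map $S\to A[t]/t^{\ell+1}$ is the same as a $k$-algebra map $R\to A[t]/t^{\ell+1}$ killing $\a$, equivalently killing every $D_i(f)$ for $f\in\a$, and comparing the two representing objects (naturally in $A$) gives $S_\ell\cong R_\ell/\a_\ell$ and the stated compatibility. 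The case $\ell=\infty$ follows by passing to the colimit $R_\infty=\varinjlim_\ell R_\ell$, under which $S_\infty$ and $\a_\infty$ are the colimits of the $S_\ell$ and the $\a_\ell$. I expect this identification to be the only step with genuine content; everything after it is bookkeeping, and in particular no finiteness or residue-field hypothesis plays any role.

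Granting the above, I would record three consequences. First, since $\pi_\ell$ is surjective and sends the generator $D_i^R(f)$ of $(f)_\ell$ to $D_i^S(\bar f)$, where $\bar f=\pi(f)$, it carries the ideal $(f)_\ell\subset R_\ell$ onto $(\bar f)_\ell\subset S_\ell$; likewise $\pi_\ell(\m R_\ell)=\bar\m S_\ell$. Second, since $\ker\pi_\ell=\a_\ell$ is contained in $\a_\ell+\m R_\ell$, for any ideal $I\subset R_\ell$ one has $I\subset\a_\ell+\m R_\ell$ if and only if $\pi_\ell(I)\subset\pi_\ell(\a_\ell+\m R_\ell)=\bar\m S_\ell$; for the nonobvious direction, if $\pi_\ell(x)\in\bar\m S_\ell$ then $x$ differs from an element of $\a_\ell+\m R_\ell$ by an element of $\ker\pi_\ell\subset\a_\ell+\m R_\ell$. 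Third, $(0_S)_\ell=(0)$ because $D_i^S(0)=0$, so Definition~\ref{defn} applied to $S$ reads $(0_S)\jc\ell=\{g\in S:(g)_\ell\subset\bar\m S_\ell\}$ and $(0_S)\ac=\{g\in S:(g)_\infty\subset\bar\m S_\infty\}$.

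With these in hand the lemma drops out. Fix $\ell\in\N\cup\{\infty\}$ and $f\in R$. By Definition~\ref{defn}, $f\in\a\jc\ell$ means $(f)_\ell\subset\a_\ell+\m R_\ell$; by the second consequence applied to $I=(f)_\ell$, together with the first, this is equivalent to $(\bar f)_\ell\subset\bar\m S_\ell$, which by the third consequence says precisely that $\bar f\in(0_S)\jc\ell$, i.e.\ $f\in\pi\inv\bigl((0_S)\jc\ell\bigr)$. Hence $\a\jc\ell=\pi\inv\bigl((0_S)\jc\ell\bigr)$, and the same argument with $\ell=\infty$ gives $\a\ac=\pi\inv\bigl((0_S)\ac\bigr)$; as a consistency check, both sides contain $\a=\pi\inv(0)$, as they must. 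The only point genuinely requiring care is the identification $S_\ell\cong R_\ell/\a_\ell$ compatibly with the universal derivations, which I would either cite from the standard theory of Hasse--Schmidt algebras or deduce as above from the representing property of $\HS^\ell_{R/k}$.
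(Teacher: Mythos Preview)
Your argument is correct. The identification $S_\ell\cong R_\ell/\a_\ell$ compatible with the universal derivations is indeed the only substantive point, and your derivation of it from the representing property of $\HS^\ell_{R/k}$ (and by passage to the colimit for $\ell=\infty$) is fine; the subsequent ideal bookkeeping is accurate, in particular the step showing $I\subset\a_\ell+\m R_\ell\iff\pi_\ell(I)\subset\bar\m S_\ell$ via $\ker\pi_\ell=\a_\ell$.

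There is nothing to compare against in this paper: the lemma is quoted as \cite[Lemma~3.2]{DEI} and no proof is given here. Your write-up is essentially what the standard proof looks like, so it would serve perfectly well as a self-contained justification in place of the citation.
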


Thus it suffices to know how to compute the arc or $\ell$-jet closure of the zero ideal, for which there is a nice interpretation in terms of the ``universal'' $\ell$-jet:
the identity morphism $\Spec R_\ell\to \Spec R_\ell$ corresponds to the ``universal'' $\ell$-jet $(\Spec R_\ell)\times_k \Spec(k[t]/t^{\ell+1})\to \Spec R$, given by the ring map
$$
\mu_R\colon  R\to R_\ell[t]/t^{\ell+1};
$$
by composing with the quotient map
$$
R_\ell[t]/t^{\ell+1} \to 
\bigl(R_\ell/\m R_\ell\bigr)[t]/t^{\ell+1},
$$
we obtain a map
$$
\l_\ell\colon  R\to \bigl(R_\ell/\m R_\ell\bigr)[t]/t^{\ell+1}.
$$

The following statement is now clear by definition:

\begin{lem}[{\cite[Lemma~3.3]{DEI}}]
$(0_R)\jc \ell = \ker \l_\ell$
and
$(0_R)\ac  = \ker \l_\infty$.
\end{lem}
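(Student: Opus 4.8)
The plan is simply to unwind the definitions of the maps $\mu_R$ and $\l_\ell$. Recall that the universal Hasse--Schmidt derivations $D_i\colon R\to R_\ell$ are precisely the components of $\mu_R$, in the sense that $\mu_R(f)=\sum_i D_i(f)\,t^i\in R_\ell[t]/t^{\ell+1}$; this is a ring homomorphism because of the Leibniz rule defining the $D_i$. Composing with the coefficientwise reduction $R_\ell[t]/t^{\ell+1}\to (R_\ell/\m R_\ell)[t]/t^{\ell+1}$, we get $\l_\ell(f)=\sum_i \overline{D_i(f)}\,t^i$, where $\overline{(\cdot)}$ denotes the image in $R_\ell/\m R_\ell$. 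Since an element of $(R_\ell/\m R_\ell)[t]/t^{\ell+1}$ --- written as a (finite, or infinite when $\ell=\infty$) series $\sum_i c_i t^i$ --- is zero exactly when all of its coefficients $c_i$ vanish, we conclude
$$
\ker\l_\ell=\bigl\{\,f\in R: D_i(f)\in\m R_\ell\ \text{for all }0\le i<\ell+1\,\bigr\}.
$$

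Next I would match this against Definition~\ref{defn}. There, $(0_R)\jc\ell=\{\,f\in R:(f)_\ell\subseteq (0)_\ell+\m R_\ell\,\}$ (and similarly with $\ac$ when $\ell=\infty$). Now $(0)_\ell$ is by definition the ideal of $R_\ell$ generated by the elements $D_i(0)=0$, hence $(0)_\ell$ is the zero ideal, so the defining condition collapses to $(f)_\ell\subseteq \m R_\ell$. But $(f)_\ell$ is, again by definition, the ideal of $R_\ell$ generated by the elements $D_i(f)$ for $0\le i<\ell+1$; since $\m R_\ell$ is an ideal of $R_\ell$, the containment $(f)_\ell\subseteq \m R_\ell$ holds if and only if every generator $D_i(f)$ lies in $\m R_\ell$. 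Comparing with the description of $\ker\l_\ell$ above, this is exactly the statement $f\in\ker\l_\ell$, so $(0_R)\jc\ell=\ker\l_\ell$; taking $\ell=\infty$ gives $(0_R)\ac=\ker\l_\infty$ verbatim.

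I do not expect any genuine obstacle: the lemma is ``clear by definition,'' and the only content is the elementary equivalence between ``the ideal generated by a family of elements is contained in an ideal $\m R_\ell$'' and ``each element of the family lies in $\m R_\ell$,'' together with the observation that $(0)_\ell=(0)$. The only points worth a second's attention are that the index ranges in Definition~\ref{defn} and in the formula for $\mu_R$ agree ($0\le i<\ell+1$, interpreted as $0\le i\le\ell$ for finite $\ell$ and as all $i\ge 0$ for $\ell=\infty$), and that the map $R_\ell[t]/t^{\ell+1}\to(R_\ell/\m R_\ell)[t]/t^{\ell+1}$ really is reduction of coefficients --- both immediate from the constructions recalled just above the statement.
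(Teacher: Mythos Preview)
Your proof is correct and is exactly what the paper intends: the lemma is stated there as ``clear by definition,'' and you have simply written out that definition-chase in full. The one tiny point you glossed is that $(f)_\ell$ is literally defined as the ideal generated by $D_i(g)$ for all $g\in(f)$, not just $D_i(f)$; but the Leibniz rule immediately shows these generate the same ideal, so there is no gap.
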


\begin{exa}
In the case
$R=k[x_1,\dots,x_n]_{(x_1,\dots,x_n)}$
the universal $\ell$-jet
$R\to R_\ell[t]/t^{\ell+1}$
sends
$$
x_i\mapsto x_i^{(0)} + x_i^{(1)}t 
+
x_i^{(2)}t^2
+ \dots+ x_i^{(\ell)}t^\ell.
$$
The ideal $\m R_\ell$ is just $\bigl(x_i^{(0)}:i=1,\dots,\ell\bigr)$; note that this is \emph{not} the expansion of $\m$ under the universal $\ell$-jet.
\end{exa}

The following
result linking $\a \jc \ell $ and $\a \ac$ is key to our proof below:

\begin{prop}[{\cite[Proposition~3.12]{DEI}}]
\label{comp}
$\bigcap_{\ell \geq 0} \a \jc \ell= \a \ac$.
\end{prop}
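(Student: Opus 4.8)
We want to show $\bigcap_{\ell\geq 0}\a\jc\ell = \a\ac$, i.e., that an element lies in every $\ell$-jet closure of $\a$ precisely when it lies in the arc closure. By Lemma~\ref{hered} we may pass to $R/\a$ and reduce to the case $\a=(0)$, so it suffices to prove $\bigcap_\ell \ker\l_\ell = \ker\l_\infty$, where $\l_\ell\colon R\to (R_\ell/\m R_\ell)[t]/t^{\ell+1}$ is the map of the previous lemma. The plan is to compare these maps explicitly through the compatible system of truncation and projection morphisms.

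First, for the inclusion $\ker\l_\infty \subseteq \bigcap_\ell \ker\l_\ell$: the truncation $k[[t]]\to k[t]/t^{\ell+1}$ and the projection $R_\infty \to R_\ell$ fit into a commutative square with $\l_\infty$ and $\l_\ell$, so that $\l_\ell$ factors as $R \xrightarrow{\l_\infty} (R_\infty/\m R_\infty)[[t]] \to (R_\ell/\m R_\ell)[t]/t^{\ell+1}$; hence anything killed by $\l_\infty$ is killed by each $\l_\ell$. Equivalently, unwinding the definition, if $(f)_\infty \subseteq \m R_\infty$ then applying the ring map $R_\infty \to R_\ell$ (which sends $\m R_\infty$ into $\m R_\ell$ and $(f)_\infty$ onto $(f)_\ell$) gives $(f)_\ell \subseteq \m R_\ell$, so $f\in (0)\jc\ell$.

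The substantive direction is $\bigcap_\ell\ker\l_\ell \subseteq \ker\l_\infty$. Here the key point is that $R_\infty = \HS^\infty_{R/k}$ is the inverse limit of the $R_\ell$, and correspondingly $R_\infty/\m R_\infty = \varprojlim R_\ell/\m R_\ell$ and $(R_\infty/\m R_\infty)[[t]] = \varprojlim (R_\ell/\m R_\ell)[t]/t^{\ell+1}$, compatibly with the $\l_\ell$. Thus $\l_\infty$ is the induced map into the inverse limit, and its kernel is exactly $\bigcap_\ell \ker\l_\ell$. Concretely: given $f$ with $(f)_\ell \subseteq \m R_\ell$ for all finite $\ell$, write $D_i(f)$ for the universal Hasse--Schmidt components; the hypothesis says each $D_i(f)$ maps to $0$ in $R_\ell/\m R_\ell$ for every $\ell\geq i$, i.e., $D_i(f)\in \m R_\ell$ for all $\ell\geq i$; since the universal derivation components are compatible under $R_\ell \to R_{\ell'}$ and $\m R_\infty = \bigcup_\ell (\text{image of }\m R_\ell)$ — more precisely, $D_i(f)\in \m R_\infty$ iff $D_i(f)\in \m R_\ell$ for $\ell$ large — we conclude $D_i(f)\in \m R_\infty$ for all $i$, hence $(f)_\infty \subseteq \m R_\infty$, i.e., $f\in (0)\ac$.

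The main obstacle is making the last inverse-limit compatibility precise: one must check that the formation of the Hasse--Schmidt ring, the expansion of $\m$, and the quotient all commute suitably with the inverse limit over $\ell$, so that membership of a universal differential $D_i(f)$ in $\m R_\infty$ can genuinely be tested at finite level. Once the identification $R_\infty/\m R_\infty \cong \varprojlim R_\ell/\m R_\ell$ (as topological/ind-rings, or at least the statement that $\m R_\infty$ pulls back correctly) is established, the proposition is immediate. I would handle this by working directly with the universal jet maps $\mu_R$ rather than abstract limits, since the $\mu_R$ for varying $\ell$ are manifestly compatible and each $\l_\ell$ is obtained from $\mu_R$ by a single base change, which makes the factorization bookkeeping transparent.
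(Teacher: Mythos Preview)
The paper does not give its own proof of this proposition; it is simply quoted from \cite{DEI}. So there is no argument in the paper to compare against, and I will just comment on the proposal itself.

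Your main error is the direction of the limit. The paper records that $J_\infty(X)$ is the \emph{inverse} limit of the $J_\ell(X)$ as schemes; passing to rings reverses this, so $R_\infty=\varinjlim_\ell R_\ell$ is a \emph{direct} limit, with transition maps $R_{\ell'}\to R_\ell$ for $\ell'<\ell$ (this is exactly what the paper says in Section~\ref{arcs}). In particular there is no ``projection $R_\infty\to R_\ell$'', so your factorization $\l_\ell = (\text{map})\circ\l_\infty$ and your sentence ``applying the ring map $R_\infty\to R_\ell$'' in the easy direction are not valid as written.

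Once you correct this, the two halves behave rather differently than you suggest. The direction you call substantive, $\bigcap_\ell\ker\l_\ell\subseteq\ker\l_\infty$, becomes the genuinely easy one: if $D_i(f)\in\m R_\ell$ for some $\ell\ge i$, then pushing forward along $R_\ell\to R_\infty$ immediately gives $D_i(f)\in\m R_\infty$; since direct limits are exact, one also has $R_\infty/\m R_\infty=\varinjlim R_\ell/\m R_\ell$, and your concrete argument goes through. Conversely, the direction you call easy, $\ker\l_\infty\subseteq\ker\l_\ell$, now requires a small argument: one needs that $R_\ell/\m R_\ell\to R_\infty/\m R_\infty$ is \emph{injective} (equivalently $\m R_\infty\cap R_\ell=\m R_\ell$ inside $R_\infty$), so that $D_i(f)\in\m R_\infty$ forces $D_i(f)\in\m R_\ell$. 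You have not addressed this, and your abstract inverse-limit claims do not supply it. Rewriting the proof with the correct colimit picture and isolating this injectivity step would make the argument complete.
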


The geometric interpretation following Definition~\ref{defn} makes clear the motivation for this closure operation:

\begin{prop}[{\cite[Proposition~5.1]{DEI}}]
Let  $R$ be a local $k$-algebra.
The germ 
$\Spec R$ has the embedded local isomorphism property if and only if $(0_R)\ac = 0$.
\end{prop}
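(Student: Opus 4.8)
The plan is to translate the embedded local isomorphism property for the germ $\Spec R$ into a condition purely about ideals of $R$, and then recognize that condition as exactly the statement $(0_R)\ac=0$. First I would note that a closed embedding of germs $f\colon (X,x)\to (\Spec R,\Spec(R/\m))$ is nothing but a surjection $R\to S$; writing $S=R/\a$, the requirement that the closed point of $X$ lie over the closed point of $\Spec R$ forces $\a\subseteq\m$, and $f$ is an isomorphism exactly when $\a=0$. So the proposition reduces to proving the following dictionary: \emph{for an ideal $\a\subseteq\m$, all of the morphisms $\bar f_\ell$ (with $\ell\in\N\cup\{\infty\}$) are isomorphisms if and only if $\a\subseteq(0_R)\ac$.} Granting this, the result follows immediately: if $(0_R)\ac=0$, then any such $f$ with all $\bar f_\ell$ isomorphisms has $\a\subseteq 0$, hence $\a=0$ and $f$ is an isomorphism, so $\Spec R$ has the embedded local isomorphism property; conversely, if $(0_R)\ac\neq 0$, then $\a:=(0_R)\ac$ is a proper ideal --- it lies inside $\m$, since $(0_R)\jc 0=\m$ --- and the closed embedding $\Spec(R/\a)\to\Spec R$ is not an isomorphism yet has every $\bar f_\ell$ an isomorphism, so the property fails.

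To prove the dictionary I would make the morphisms $\bar f_\ell$ explicit. Using $J_\ell(\Spec R)=\Spec R_\ell$, the identification of the $\ell$-jet scheme of the closed subscheme $V(\a)\subseteq\Spec R$ with $\Spec(R_\ell/\a_\ell)$, and the fact that the fiber of $\pi_\ell$ over the closed point of a local $k$-algebra $A$ is $\Spec(A_\ell/\m_A A_\ell)$, one finds that $\bar f_\ell$ is the closed immersion
$$
\Spec\bigl(R_\ell/(\a_\ell+\m R_\ell)\bigr)\hookrightarrow \Spec\bigl(R_\ell/\m R_\ell\bigr),
$$
and the case $\ell=\infty$ works identically, using that $R_\infty$ is the filtered colimit of the $R_\ell$ and that $\a_\infty$ is the corresponding colimit of the $\a_\ell$. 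Such a closed immersion is an isomorphism precisely when $\a_\ell\subseteq\m R_\ell$. Since $\a_\ell=\sum_{g\in\a}(g)_\ell$ and $(0)_\ell=0$, this says exactly that every $g\in\a$ lies in $(0_R)\jc\ell=\{g\in R:(g)_\ell\subseteq\m R_\ell\}$, i.e., that $\a\subseteq(0_R)\jc\ell$; the identical computation at $\ell=\infty$ shows $\bar f_\infty$ is an isomorphism if and only if $\a\subseteq(0_R)\ac$.

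Finally I would feed these equivalences into Proposition~\ref{comp}: since $\bigcap_{\ell\geq 0}(0_R)\jc\ell=(0_R)\ac$, the condition ``$\a\subseteq(0_R)\jc\ell$ for all finite $\ell$'' already forces ``$\a\subseteq(0_R)\ac$''. Hence the requirement that every $\bar f_\ell$ (for all finite $\ell$ and for $\ell=\infty$) be an isomorphism collapses to the single condition $\a\subseteq(0_R)\ac$, which is the claimed dictionary, and the proposition follows.

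The only real obstacle I anticipate is the bookkeeping in the middle paragraph: verifying that $J_\ell$ carries the closed subscheme $V(\a)$ to $V(\a_\ell)$ and that passing to the fiber over the closed point commutes with this, uniformly in $\ell\in\N\cup\{\infty\}$ (the arc case requiring the description of $R_\infty$ as a filtered colimit and exactness of colimits). Once that identification is in place, the rest is a direct unwinding of Definition~\ref{defn} together with the already-available Proposition~\ref{comp} --- in particular, Lemma~\ref{hered} is not even needed.
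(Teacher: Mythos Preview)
Your proposal is correct and follows the same approach as the paper's (very brief) proof: identify a closed embedding with a quotient $R\to R/\a$, observe that the induced map on jet/arc fibers over the closed point is an isomorphism iff $\a_\ell\subseteq\m R_\ell$, and recognize this as $\a\subseteq(0_R)\ac$. You are in fact more careful than the paper in explicitly invoking Proposition~\ref{comp} to ensure the finite-$\ell$ conditions follow from $\a\subseteq(0_R)\ac$, a point the paper's proof leaves implicit and only addresses in the subsequent remark.
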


\begin{proof}
This is essentially by definition: a closed embedding $X\to \Spec R$ corresponds to a quotient $R\to R/\a $ for some $\a \subset R$;  this is an isomorphism of schemes if and only if $\a =0$, and it induces an isomorphism on the fibers of the jet schemes over the closed point if and only if $(\a )_\infty \subset \m R_\infty$ if and only if $\a \subset (0)\ac$.
\end{proof}

\begin{rem}
\label{finite}
  It is observed in \cite{DEI} that
the two preceding propositions imply that
 it is redundant in the statement of the embedded local isomorphism problem to ask for $\bar f_\ell$ to be an isomorphism for all $\ell\in \N\cup\set{\infty}$: $\bar f_\infty$ is an isomorphism if and only if $\bar f_\ell$ is an isomorphism for all finite $\ell$.
\end{rem}

\begin{rem}
In the non-Noetherian setting,
Proposition~5.4 of \cite{DEI} provides an example  of an ideal $\a $ inside a power series ring in infinitely many variables such that $\a \ac \neq \a$; this is proved via the observation that $\a \jc \ell \supset \a  +\m^{\ell+1}$, and then giving an explicit element contained in $\a +\m^{\ell+1}$ for all $\ell$.

We remark here that this situation may in some sense be typical, at least for certain classes of non-Noetherian rings: if $(R,\m)$ is a non-Noetherian valuation ring, then one has $\m=\m^2=\cdots$ (see, for example, \cite[Exercise~6.29]{SH}). Thus, for any ideal $\a \subset \m$, including the zero ideal, we have $\a +\m^{\ell}=\m$ for all~$\ell$, and thus $\a \ac=\m$.
\end{rem}

The last result we need from \cite{DEI} is that says that a graded $k$-algebra has arc-closed zero ideal; we'll write $R_{[i]}$ for the $i$-th graded piece of a graded ring $R$ to avoid confusion with the jet schemes $R_\ell$.

\begin{thm}[{\cite[Theorem~5.8(a)]{DEI}}]
\label{graded}
Let $(R,\m)$ be a local $k$-algebra with $\N$-grading such that $\m=\bigoplus_{i\geq 1} R_{[i]}$.
Then the zero ideal of $R$ is arc-closed.
\end{thm}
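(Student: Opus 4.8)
The plan is to exploit the scaling ($\mathbf{G}_m$-)action on $\Spec R$: it produces a $k$-algebra map $\eta$, injective by a one-line argument, such that $\eta$ factors through the map $\l_\infty$ of \cite[Lemma~3.3]{DEI}; hence $\ker\l_\infty\subseteq\ker\eta=0$, so $(0_R)\ac=\ker\l_\infty=0$ by that lemma.

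First I would write down $\eta$. For $f\in R$ let $f_{[d]}$ denote its degree-$d$ homogeneous component (a finite sum, since $R$ is graded), and set $\eta(f)=\sum_{d\geq0}f_{[d]}\,t^d\in R[t]$. Then $\eta\colon R\to R[t]$ is a $k$-algebra homomorphism: it is additive, it is the identity on $k\subseteq R_{[0]}$, and it is multiplicative because a product of homogeneous elements of degrees $d$ and $e$ is homogeneous of degree $d+e$. Composing $\eta$ with the substitution $R[t]\to R$, $t\mapsto1$, recovers $\mathrm{id}_R$, so $\eta$ is (split) injective; and $\eta(\m)\subseteq tR[t]$, since by hypothesis $\m=\bigoplus_{i\geq1}R_{[i]}$.

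Next I would run $\eta$ through the universal property of $R_\infty$. For any $k$-algebra $A$ one has $\Hom_k(R_\infty,A)\cong\Hom_k(R,A[[t]])$, with $\psi$ corresponding to $\psi[[t]]\circ\mu_R$, where $\mu_R\colon R\to R_\infty[[t]]$ is the universal arc and $\psi[[t]]$ is the coefficientwise extension of $\psi$ to power series. Applying this with $A=R$, the homomorphism $\eta$ corresponds to a unique $k$-algebra map $\Psi\colon R_\infty\to R$ with $\Psi[[t]]\circ\mu_R=\eta$. Comparing constant terms in $\sum_{i\geq0}\Psi(D_i(m))\,t^i=\Psi[[t]](\mu_R(m))=\eta(m)$ for $m\in\m$, together with $\eta(\m)\subseteq tR[t]$, forces $\Psi(D_0(m))=0$; since $\m R_\infty$ is the ideal of $R_\infty$ generated by $D_0(\m)$, we get $\Psi(\m R_\infty)=0$, so $\Psi$ descends to a $k$-algebra map $\bar\Psi\colon R_\infty/\m R_\infty\to R$.

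Finally I would chase the diagram: $\l_\infty$ is $\mu_R$ followed by the quotient $R_\infty[[t]]\to(R_\infty/\m R_\infty)[[t]]$, so $\bar\Psi[[t]]\circ\l_\infty=\Psi[[t]]\circ\mu_R=\eta$, whence $\ker\l_\infty\subseteq\ker\eta=0$. By \cite[Lemma~3.3]{DEI} this gives $(0_R)\ac=\ker\l_\infty=0$, i.e.\ the zero ideal of $R$ is arc-closed. There is no serious obstacle once one thinks of the scaling arc; the only points needing attention are the identification $\Hom_k(R_\infty,A)\cong\Hom_k(R,A[[t]])$ (formal bookkeeping, coming from $R_\infty=\varinjlim R_\ell$ and the fact that $\Spec R_\ell$ represents the $\ell$-jet functor) and the description $\m R_\infty=(D_0(\m))$. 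In particular the argument uses neither that $R$ is Noetherian nor any hypothesis on the residue field, only the assumption $\m=\bigoplus_{i\geq1}R_{[i]}$.
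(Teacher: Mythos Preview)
Your argument is correct and is essentially identical to the paper's own proof: the paper's $\rho\colon R\to R[[t]]$ is your $\eta$ (viewed in $R[[t]]$), the paper's $\phi$ is your $\Psi$, and the factorization through $(R_\infty/\m R_\infty)[[t]]$ is obtained in exactly the same way by comparing constant terms. The only cosmetic differences are that you land in $R[t]$ rather than $R[[t]]$ and you spell out injectivity via the splitting $t\mapsto 1$, neither of which changes the substance.
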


\begin{rem}
The hypotheses do not demand that $k$ be all of $R_{[0]}$ (which is the residue field of $R$); this is important in our application later.
\end{rem}

We recall their proof here for ease of reference:

\begin{proof}
We construct an explicit arc using the data of the grading:
define an arc $\rho\colon R\to R[[t]]$ by
sending a homogeneous element $f\in R_{[i]}$
to
$f t^i $.
It's immediate that $\rho$ is injective. By universality of the arc $R\to R_\infty[[t]]$ we get a map $\phi\colon R_\infty\to R$, inducing a map $\wtilde\phi\colon R_\infty[[t]]\to R[[t]]$ making the following diagram commute:
$$\begin{tikzcd}
  & R_\infty[[t]] \ar[d,"\wtilde \phi"]\\
R\ar[r,"\rho"',hook]\ar[ru,"\mu_R"] & R[[t]]
\end{tikzcd}$$
Now, observe that for $f\in \m$ we have 
$$\rho(f)=\wtilde\phi(\mu_R(f))=\wtilde \phi\bigl(d_0(f)+d_1(f)t+\cdots\bigr)
=\phi(d_0(f)) +\phi(d_1(f))t+\cdots.
$$
Since $\rho(f)\in tR[[t]]$, however, we must have that $\phi(d_0(f))=0$ for all $f\in \m$, and thus $\wtilde \phi$ factors through $R_\infty/\m R_\infty[[t]]$, yielding a commutative diagram
$$\begin{tikzcd}
  & R_\infty[[t]] \ar[d,"\wtilde \phi"]
\ar[r,two heads]  & (R_\infty/\m R_\infty)[[t]]\ar[ld,dashed]\\
R\ar[r,"\rho"',hook]\ar[ru,"\mu_R"] & R[[t]]
\end{tikzcd}$$
Thus, we must have that the composite map $\l_R\colon R\to R_\infty[[t]]\to (R_\infty/\m R_\infty)[[t]]$ is injective since $\rho $ is, and so $(0)\ac=0$.
\end{proof}

We also require the following persistence statement:

\begin{lem}
\label{ringpersistence}
Arc closures of ideals are persistent under local ring homomorphisms; that is, if 
$(R,\m)$ and $(S,\n)$ are local rings and
$\phi\colon R\to S$  a local homomorphism, and $\a \subset R$, then 
$\phi(\a\ac)\subset (\phi(\a)S)\ac$.
\end{lem}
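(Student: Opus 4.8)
The plan is to deduce this directly from the functoriality of the arc‑scheme construction together with the locality hypothesis, with no serious input beyond what is already recalled. The key structural fact I would set up first: a ($k$‑algebra) homomorphism $\phi\colon R\to S$ induces a $k$‑algebra homomorphism $\phi_\infty\colon R_\infty\to S_\infty$ — coming from the morphism $J_\infty(\Spec S)\to J_\infty(\Spec R)$, or equivalently from the universal property of $R_\ell=\HS^\ell_{R/k}$ followed by passing to the limit — and this map is compatible with the universal Hasse--Schmidt derivations, i.e. $\phi_\infty\circ D_i^R=D_i^S\circ\phi$ for all $i$. Two consequences of this compatibility are what I would actually use: (i) for any ideal $I\subset R$, the expansion $\phi_\infty(I_\infty)S_\infty$ of $I_\infty$ to $S_\infty$ equals $(\phi(I)S)_\infty$, since both are the ideal of $S_\infty$ generated by the elements $D_i^S(\phi(f))$ with $f$ running over generators of $I$ (one direction is clear, the other follows from the Leibniz rule); and (ii) since $\phi$ is \emph{local}, $\phi(\m)\subset\n$, hence $\phi_\infty(\m R_\infty)S_\infty\subset\n S_\infty$.

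Granting that, the argument is a one‑line chase with Definition~\ref{defn}. Let $f\in\a\ac$, so that $(f)_\infty\subset\a_\infty+\m R_\infty$ inside $R_\infty$. Applying $\phi_\infty$ and expanding to $S_\infty$, and using (i) and (ii), I get
$$(\phi(f))_\infty=\phi_\infty\bigl((f)_\infty\bigr)S_\infty\subset\phi_\infty(\a_\infty)S_\infty+\phi_\infty(\m R_\infty)S_\infty\subset(\phi(\a)S)_\infty+\n S_\infty,$$
which is exactly the condition $\phi(f)\in(\phi(\a)S)\ac$. Alternatively, one can first reduce to the case $\a=0$ via Lemma~\ref{hered}: $\phi$ induces a local homomorphism $R/\a\to S/\phi(\a)S$, and it suffices to send $(0_{R/\a})\ac$ into $(0_{S/\phi(\a)S})\ac$; for this one uses that $\phi$ makes the square comparing the maps $\l_\infty$ of $R$ and of $S$ commute, so $\ker\l_\infty^R$ maps into $\ker\l_\infty^S$. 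Either route is routine.

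The only point that is not purely formal — the ``main obstacle'' — is the functoriality statement $\phi_\infty\circ D_i^R=D_i^S\circ\phi$ and the resulting identification in (i); since the paper works with the rings $\HS^\ell_{R/k}$ explicitly rather than quoting arc‑scheme functoriality as a black box, it is worth spelling out that the universal property of $R_\ell$ produces $\phi_\ell$ with this compatibility and then taking the inverse limit over $\ell$. I would also make explicit the standing assumption, implicit in the paper's setting of local $k$‑algebras, that $\phi$ is a homomorphism of $k$‑algebras (and ``local'' means $\phi^{-1}(\n)=\m$), since this is what makes $\phi_\infty$ and the inclusion $\phi_\infty(\m R_\infty)S_\infty\subset\n S_\infty$ available.
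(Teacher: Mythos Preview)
Your proposal is correct, and your alternative route---reducing to $\a=0$ via Lemma~\ref{hered} and then checking that the square with $\l_\infty^R$ and $\l_\infty^S$ commutes---is exactly the paper's argument. Your primary route, working directly with Definition~\ref{defn} via the induced map $\phi_\infty\colon R_\infty\to S_\infty$ and the identities (i) and (ii), is a harmless variant of the same functoriality idea that simply skips the reduction to $\a=0$; both approaches rest on the same compatibility $\phi_\infty\circ D_i^R=D_i^S\circ\phi$ and the locality hypothesis.
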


\begin{proof}
First, note that we have a commutative diagram
$$\begin{tikzcd}
R\ar[r,"\phi"] \ar[d,"\pi"]& S\ar[d,"\pi'"]\\
R/\a \ar[r,"\wtilde \phi"] & S/\a S
\end{tikzcd}$$
By Lemma~\ref{hered},
elements $r\in \a\ac$ are precisely the elements of $R$ such that $\pi(r)$ lies in the arc-closure of $(0)$ in $R/\a$; thus if we can show persistence for the map $\wtilde \phi$ we have that $\pi'(\phi(r))=\wtilde\phi(\pi(r))$ lies in the arc closure of 0 in $S/\a S$, and thus applying Lemma~\ref{hered} again we have $\phi(r) \in (\phi(\a)S)\ac$. Thus, we may assume that $\a =(0)$.

We have an $(S_\infty/\n S_\infty)$-arc from $R$, i.e., the map $R\to S \to (S_\infty/\n S_\infty)[[t]]$; by universality of the arc $R\to R_\infty[[t]]$ this induces a ring map 
$R_\infty \to S_\infty/\n S_\infty$.
Since 
$\phi$ is local,  this descends to a ring map $R_\infty/\m R_\infty \to S_\infty/\n S_\infty$, and thus we have a ring map 
$$(R_\infty/\m R_\infty)[[t]] \to (S_\infty/\n S_\infty)[[t]],$$
fitting into the 
commutative diagram
$$\begin{tikzcd}
R\ar[d,"\l_R"]\ar[r,"\phi"] & S\ar[d,"\l_S"]\\
(R_\infty/\m R_\infty)  [[t]]
\ar[r,]
& (S_\infty/\n S_\infty) [[t]]
\end{tikzcd}$$
Commutativity of this diagram then implies immediately that since $(0_R)\ac=\ker\l_R$, we have  $\l_S\bigl(\phi((0_R)\ac)\bigr)=0$, so that $\phi((0_R)\ac)\subset (0_S)\ac$, yielding the result.
\end{proof}

\section{Arc closures of submodules}
\label{modules}
The key to our proof is to introduce the notion of arc-closure of an $R$-module:

\begin{dfn}
For an $R$-module $M$, define
$$
(0_M)\ac = \ker\biggl(
 M\xra{1_M\otimes \l_\infty} M\otimes_R \bigl(R_\infty/\m R_\infty\bigr)[[t]]
\biggr).
$$
For a submodule $N\subset M$, define $(N)_M\ac=\pi_N\inv\bigl((0_{M/N})\ac\bigr)$, where $\pi_N\colon M\to M/N$.
\end{dfn}

\begin{lem}
\label{persistence}
Arc closures of $R$-submodules are persistent under $R$-linear maps; that is, if $N\subset M$ is a submodule and $\phi\colon M\to M'$ is an $R$-module map, then $\phi((N)_M\ac)\subset (\phi(N))_{M'}\ac$.
\end{lem}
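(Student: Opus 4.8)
The plan is to follow the device used in the proof of Lemma~\ref{ringpersistence}: first reduce to the case of the zero submodule, and then deduce that case from a naturality square for base change along $\l_\infty$.

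First I would handle the reduction. Given $\phi\colon M\to M'$ and a submodule $N\subset M$, the image $N':=\phi(N)$ is a submodule of $M'$, and $\phi$ descends to a map $\bar\phi\colon M/N\to M'/N'$ with $\pi_{N'}\circ\phi=\bar\phi\circ\pi_N$. If persistence is known for $\bar\phi$ applied to the zero submodule of $M/N$, then for $m\in (N)_M\ac=\pi_N\inv\bigl((0_{M/N})\ac\bigr)$ one gets $\pi_N(m)\in(0_{M/N})\ac$, hence $\pi_{N'}(\phi(m))=\bar\phi(\pi_N(m))\in(0_{M'/N'})\ac$, i.e.\ $\phi(m)\in\pi_{N'}\inv\bigl((0_{M'/N'})\ac\bigr)=(\phi(N))_{M'}\ac$. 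So it suffices to prove $\phi\bigl((0_M)\ac\bigr)\subset(0_{M'})\ac$ for an arbitrary $R$-linear map $\phi$.

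For this, write $A=(R_\infty/\m R_\infty)[[t]]$ and recall that $(0_M)\ac=\ker(1_M\otimes\l_\infty)$, where $1_M\otimes\l_\infty\colon M=M\otimes_R R\to M\otimes_R A$ is the base-change map $m\mapsto m\otimes 1$. Functoriality of $-\otimes_R A$ supplies a commutative square
$$\begin{tikzcd}
M \ar[r,"\phi"]\ar[d,"1_M\otimes\l_\infty"'] & M'\ar[d,"1_{M'}\otimes\l_\infty"]\\
M\otimes_R A \ar[r,"\phi\otimes 1"'] & M'\otimes_R A
\end{tikzcd}$$
from which, for $m\in\ker(1_M\otimes\l_\infty)$, one has $(1_{M'}\otimes\l_\infty)(\phi(m))=(\phi\otimes 1)\bigl((1_M\otimes\l_\infty)(m)\bigr)=0$, so $\phi(m)\in\ker(1_{M'}\otimes\l_\infty)=(0_{M'})\ac$. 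Together with the reduction, this gives the lemma.

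I expect no genuine obstacle here: the argument is entirely formal. The only two points deserving care are that the reduction produces exactly the submodule $\phi(N)$ in the target, which is what the statement asks for, and that the displayed square genuinely commutes, which is just naturality of the functor $-\otimes_R A$ applied to the map $\l_\infty$.
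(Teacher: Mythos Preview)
Your argument is correct and matches the paper's proof essentially line for line: the same reduction to $N=0$ via the quotient square, followed by the same naturality square for tensoring with $(R_\infty/\m R_\infty)[[t]]$ along $\l_\infty$.
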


\begin{proof}
Considering the commutative diagram
$$\begin{tikzcd}
M \ar[r,"\phi"]\ar[d,"\pi"] &  M'\ar[d,"\pi'"]\\
M/N \ar[r,"\bar\phi"] & M'/\phi(N)
\end{tikzcd}$$
we see that $m\in (N)_M\ac$ exactly when $\pi(m)\in (0)_{M/N}\ac$, and thus it suffices to show persistence under $\bar \phi$ to obtain it for $\phi$, i.e., it suffices to show persistence of arc closure of the zero submodule.

In this case, we have a commutative diagram
$$\begin{tikzcd}
M\ar[d]\ar[r,"\phi"] & M'\ar[d]\\
M\otimes \bigl(R_\infty/\m R_\infty\bigr)[t]/t^{\ell+1}\ar[r] &
M'\otimes \bigl(R_\infty/\m R_\infty\bigr)[t]/t^{\ell+1}
\end{tikzcd}$$
Note that $m \in M$ lies in the arc closure of 0 exactly when it's in the kernel of the left vertical map; when this occurs, commutativity of the diagram immediately implies that $\phi(m)$ is in the kernel of the right vertical map, so that $\phi(m)\in (0)_{M'}\ac$.
\end{proof}

We also need a comparison for closures as $R$-modules versus $R/I$-modules:

\begin{lem}
\label{comparison}
Let $R$ be a local ring and $I$ an ideal.
Let $M$ be an $R/I$-module, $N\subset M$ an $R/I$-submodule.
Then $$(N)_M\ac \subset ({}_RN)_{{}_R M}\ac,$$ 
where the right side is the closure of $N$ viewed as an $R$-submodule of the $R$-module $M$.
\end{lem}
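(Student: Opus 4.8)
The plan is to reduce to the zero submodule and then compare, essentially by hand, the two tensor products that compute the two closures.

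\emph{Reduction.} As $M/N$ is again an $R/I$-module and, by definition, both $(N)_M\ac$ and $({}_RN)_{{}_RM}\ac$ equal $\pi_N\inv\bigl((0_{M/N})\ac\bigr)$ — the first taken over $R/I$, the second over $R$ — it suffices to treat $N=(0)$: for an arbitrary $R/I$-module $M$, to show that the kernel of $1_M\otimes\l_{R/I}\colon M\to M\otimes_{R/I}\bigl((R/I)_\infty/\bar\m(R/I)_\infty\bigr)[[t]]$ is contained in the kernel of $1_M\otimes\l_{R}\colon M\to M\otimes_{R}\bigl(R_\infty/\m R_\infty\bigr)[[t]]$, where $\m$ and $\bar\m=\m/I$ are the maximal ideals of $R$ and $R/I$.

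\emph{The comparison map.} Using $(R/I)_\infty=R_\infty/I_\infty$, the codomain of $\l_{R/I}$ is $\bigl(R_\infty/(\m R_\infty+I_\infty)\bigr)[[t]]$, and the reduction $R_\infty/\m R_\infty\twoheadrightarrow R_\infty/(\m R_\infty+I_\infty)$ gives a surjective ring map
$$
\beta\colon\bigl(R_\infty/\m R_\infty\bigr)[[t]]\twoheadrightarrow\bigl((R/I)_\infty/\bar\m(R/I)_\infty\bigr)[[t]]
$$
with $\beta\circ\l_R=\l_{R/I}\circ(R\to R/I)$, by functoriality of the Hasse--Schmidt construction. Since the codomain of $\beta$ is an $R/I$-algebra and $M$ an $R/I$-module, base change along $R\to R/I$ identifies $M\otimes_R(\text{codomain of }\beta)$ with $M\otimes_{R/I}(\text{codomain of }\beta)$, whence $1_M\otimes\l_{R/I}=(1_M\otimes\beta)\circ(1_M\otimes\l_R)$. (This triangle gives the \emph{opposite} inclusion $({}_R0_M)\ac\subseteq(0_M)\ac$ for free; it is the direction claimed in the lemma — the one later used to transport arc-closedness from a graded ring down to $R/\a$ — that has content.)

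\emph{The key step, and the obstacle.} It remains to show $1_M\otimes\beta$ is injective on the image of $1_M\otimes\l_R$, i.e.\ that no nonzero element $m\otimes 1$ of $M\otimes_R(R_\infty/\m R_\infty)[[t]]$ lies in $\ker(1_M\otimes\beta)$, which is the image of $M\otimes_R\ker\beta$. Here $\ker\beta$ consists of the power series all of whose coefficients lie in the ideal $K$ generated by the classes of the higher Hasse--Schmidt derivatives $D_j(f)$ ($f\in I$, $j\ge 1$) in $R_\infty/\m R_\infty$ — the class of $D_0(f)=f$ dies since $f\in I\subseteq\m$ — so, for the weight grading on $R_\infty$ with $x_i^{(j)}$ in weight $j$ (so $D_j(f)$ has weight $j$), $K$ sits in strictly positive weight. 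Playing this against $I\cdot M=0$ — the ideal of $(R_\infty/\m R_\infty)[[t]]$ generated by $\l_R(I)$ annihilates $M\otimes_R(R_\infty/\m R_\infty)[[t]]$ and lies in $t\cdot(R_\infty/\m R_\infty)[[t]]$ — one reduces a putative relation $m\otimes1=\sum_i m_i\otimes\kappa_i$ ($\kappa_i\in\ker\beta$) modulo $t$, extracts the weight-$0$ part to get $m\in\m M$, and iterates (after passing to the finite jet truncations, where the module analogue of Proposition~\ref{comp} holds and $t$-adic separatedness is available) to force $m\otimes 1\in\bigcap_k t^k\bigl(M\otimes_R(R_\infty/\m R_\infty)[[t]]\bigr)=0$. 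The main obstacle is making this last bookkeeping rigorous: showing that enlarging the expansion $IR_\infty$ to the Hasse--Schmidt expansion $I_\infty$ introduces no new elements $m\otimes 1$ that become zero, granted $I\cdot M=0$.
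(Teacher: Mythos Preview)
Your reduction to $N=0$ matches the paper, and your construction of $\beta$ is correct --- but as you yourself observe, $\beta$ points the wrong way for the inclusion you need, and your attempt to repair this by proving injectivity of $1_M\otimes\beta$ on the image of $1_M\otimes\l_R$ is left unfinished. The weight/$t$-adic bookkeeping you sketch is not completed, and the ``main obstacle'' you flag is genuine: you have not shown that passing from $IR_\infty$ to $I_\infty$ creates no new kernel elements of the form $m\otimes 1$. As written, this is a gap, not a proof.

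What you are missing is that there is a map in the \emph{opposite} direction to your $\beta$, obtained from the universal property of the arc on $R/I$. The target of $\id_{R/I}\otimes_R\l_R$, namely $R/I\otimes_R (R_\infty/\m R_\infty)[[t]]$, receives a map from $\bigl((R/I)_\infty/\bar\m(R/I)_\infty\bigr)[[t]]$ making the triangle with $\l_{R/I}$ commute. Tensoring that triangle with $M$ over $R$ (and using $M\otimes_R(-)=M\otimes_{R/I}(-)$ on $R/I$-algebras) shows that
\[
\id_M\otimes_R\l_R \text{ factors through } \id_M\otimes_{R/I}\l_{R/I},
\]
so $\ker(\id_M\otimes_{R/I}\l_{R/I})\subset\ker(\id_M\otimes_R\l_R)$ immediately --- no injectivity argument needed. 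In short: rather than trying to show your surjection $\beta$ loses nothing on the relevant image, build the section-like map the other way via universality; the factorization then gives the kernel containment for free.
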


In fact, we'll need this result only for the arc closure of 0 in $R/I$ itself, but we present the proof in the general case:

\begin{proof}
It suffices to show this for $N=0$, since the quotient map $M\to M/N$ is the same whether viewed as an $R$-module map or an $R/I$-module map.
Writing
$$
\eqalign{
\l_R&\colon  R\to (R_\infty/\m R_\infty)[[t]],\cr
\l_{R/I}&\colon  R/I \to ((R/I)_\infty/\m (R/I)_\infty)[[t]],
}$$
we have a commutative diagram of ring maps
$$\begin{tikzcd}
&& R/I\otimes_R (R_\infty/\m R_\infty)[[t]] 
\\
R/I 
\ar[rru,"\id_{R/I}\otimes_R \l_R"]
 \ar[rr,"\l_{R/I}"']&& ((R/I)_\infty/\m (R/I)_\infty)[[t]]  \ar[u,dashed]
\end{tikzcd}$$
where the right vertical side is induced by the universality of the arc $\l_{R/I}$.
Tensoring over $R$ with $M$, we obtain
$$\begin{tikzcd}
 &&& 
M\otimes_R R/I\otimes_R (R_\infty/\m R_\infty)[[t]] 
\ar[r,no head, double line]
&
M\otimes_R (R_\infty/\m R_\infty)[[t]] 
\\
M \ar[rrru,"\id_M \otimes_{R}\l_R"] \ar[rrr,"\id_ M\otimes_{R}\l_{R/I}"']&&&
M\otimes_R  ((R/I)_\infty/\m (R/I)_\infty)[[t]] 
  \ar[u,dashed]\ar[r,no head, double line]
&
M\otimes_{R/I}
((R/I)_\infty/\m (R/I)_\infty)[[t]] 
\end{tikzcd}$$
Thus we see that since  
$$\id_M\otimes_R\, \l_R\colon M\to M\otimes_R (R_\infty/\m R_\infty)[[t]]$$ factors through  
$$
\id_M\otimes_{R}\,\l_{R/I}=
\id_M\otimes_{R/I}\,\l_{R/I}
\colon M\to M\otimes _{R/I} ((R/I)_\infty/\m (R/I)_\infty)[[t]],
$$ 
we must have that
$$
\underbrace{\ker\bigl(
\id_M\otimes_{R/I}\,\l_{R/I}
\bigr)}_{(N)\ac_M}\subset 
\underbrace{\ker\bigl(
\id_M\otimes_R\, \l_R
\bigr)}_
{({}_RN)_{{}_R M}\ac}
,
$$
yielding the result.
\end{proof}

\section{The main result}
\label{proof}

Given a local $k$-algebra $(R,\m,L)$ with residue field $L$, we say $L$ is separable over $k$ to mean that the field extension $k\subset R\to R/\m \cong L$ is separable (not necessarily algebraic).

\begin{thm}\label{main}
Let $(R,\m)$ be a Noetherian local $k$-algebra 
with residue field $L$ separable over $k$, 
and $\a $ a proper ideal of $R$. Then $\a \ac = \a $.
\end{thm}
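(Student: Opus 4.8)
The plan is to follow the road map sketched in the introduction, reducing the general statement to the case of a graded Gorenstein Artinian $k$-algebra, where Theorem~\ref{graded} applies. The first reduction, via Lemma~\ref{hered}, is to the case $\a = (0)$ and to showing $(0_R)\ac = 0$; moreover, since $\a\ac$ is contained in $\bigcap_\ell \a\jc\ell \subset \bigcap_\ell(\a + \m^{\ell+1})$, which by Krull's intersection theorem is $\a$ in a Noetherian ring once we intersect over primary components, it in fact suffices to handle the case where $\a$ is $\m$-primary, i.e.\ $R$ is Artinian local. (More carefully: if $\a\ac \supsetneq \a$, pick a prime $\mathfrak p$ minimal over $\a$ with $\a_{\mathfrak p}\ac \neq \a_{\mathfrak p}$ — using persistence, Lemma~\ref{ringpersistence}, to localize — but localization may move us away from the closed point, so the cleaner route is: show $\a\ac \subset \a + \m^N$ suffices via Proposition~\ref{comp} and Krull intersection only when $\a$ is already $\m$-primary, and reduce to that case by a primary-decomposition / associated-primes argument on $R/\a$.) So assume $R$ is Artinian local with residue field $L$ separable over $k$.

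Next I would induct on $\Length_R(R)$. By the Cohen structure theorem, separability of $k\hookrightarrow L$ guarantees that $R$ contains a coefficient field $L_0 \cong L$ with $k\subset L_0$, so $R$ is an $L_0$-algebra (this is the one place separability is used). If $R$ is not Gorenstein, I want to embed $(0_R)$-closure into a Gorenstein situation: the socle of $R$ has dimension $>1$, so pick a nonzero $x$ in the socle and look at the quotient $R/(x)$; it has strictly smaller length, so by induction $(0_{R/(x)})\ac = 0$, and separately one would like to conclude something about $R$. This is the delicate point — just quotienting doesn't obviously control $(0_R)\ac$. The better move, matching the introduction's outline, is: reduce to $R$ Gorenstein Artinian by a length induction where the inductive step kills a socle element, and then for Gorenstein $R$ use Matlis duality over $L_0$ to produce an $R'$-module inclusion $R/\a = R \hookrightarrow R'$ for a suitable \emph{graded} Gorenstein Artinian $k$-algebra $R'$ (e.g.\ a quotient of a polynomial ring dual to a single form). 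Here $R$ is not a subring of $R'$, only an $R'$-submodule via the Macaulay inverse-system description.

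With that inclusion in hand, the endgame uses the module machinery of Section~\ref{modules}. First, Theorem~\ref{graded} gives $(0_{R'})\ac = 0$ as an ideal of $R'$, hence also $(0_{R'})\ac = 0$ viewing $R'$ as a module over itself. By Lemma~\ref{persistence} (persistence of submodule arc-closure under $R'$-linear maps), the inclusion $\iota\colon R \hookrightarrow R'$ gives $\iota\bigl((0_R)_{R'\text{-}\mathrm{ac}}\bigr) \subset (0_{R'})\ac = 0$, so the arc closure of $0$ in $R$ \emph{as an $R'$-module} is zero. Then Lemma~\ref{comparison}, applied with the surjection $R' \twoheadrightarrow R' / (\text{annihilator})\cong$ the relevant quotient — more precisely in the form comparing $R$-module and $R'$-module closures once $R$ is regarded as a module over both $R'$ and over itself, which requires $R$ to be a quotient/subquotient presentation — upgrades this to $(0_R)\ac = 0$ as an ideal of $R$, i.e.\ as an $R$-module submodule of $R$. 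Unwinding Lemma~\ref{hered} then gives $\a\ac = \a$ in the original ring.

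The main obstacle I anticipate is the Matlis-duality step: making precise the sense in which a non-graded Gorenstein Artinian $R$ sits inside a graded one as a module (and checking this is an $R'$-linear map so that Lemma~\ref{persistence} applies), together with verifying that Lemma~\ref{comparison} is stated in exactly the form needed to pass from the $R'$-module closure back to the intrinsic $R$-ideal closure — the directions of the module/ring structures have to line up correctly. The length-induction reduction to the Gorenstein case also needs care, since arc-closure of $0$ in $R$ is not obviously controlled by its behavior in $R/(\text{socle element})$; one likely needs to run the Matlis-dual construction directly for every Gorenstein $R$ rather than via a naive quotient induction, using the induction only to handle the non-Gorenstein case by, say, intersecting the closures coming from the various Gorenstein quotients $R/(x)$ as $x$ ranges over a basis of the socle.
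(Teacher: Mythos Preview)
Your outline matches the paper's proof closely; the gaps you flag as obstacles are exactly the places where the paper supplies a concrete choice or a one-line argument you are missing.

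\textbf{Reduction to $\m$-primary.} No primary decomposition or localization is needed. The paper shows directly that $\a\jc\ell = (\a+\m^n)\jc\ell$ for $n>\ell$ (by the Leibniz rule, $(\m^n)_\ell\subset \m R_\ell$), hence $\a\ac=\bigcap_n(\a+\m^n)\ac$; Krull intersection then finishes.

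\textbf{Length induction to the Gorenstein case.} Your worry that ``quotienting doesn't obviously control $(0_R)\ac$'' is unfounded. Take $f\in(0_R)\ac$. If some $g\in\Soc R$ has $f\notin(g)=L\cdot g$, then persistence (Lemma~\ref{ringpersistence}) for $R\to R/(g)$ and the inductive hypothesis force $f\in(g)$, a contradiction. So $f\in(g)$ for every nonzero socle element $g$; if $f\neq 0$ this forces $\Soc R=(f)$ to be one-dimensional, i.e.\ $R$ is Gorenstein. No intersection over Gorenstein quotients is needed.

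\textbf{The Matlis-dual step.} The missing concrete ingredient is the choice of $R'$. Write $R=S/I$ with $S=L_0[[x_1,\dots,x_n]]$ via Cohen, and pick $N$ with $\m_N:=(x_1^N,\dots,x_n^N)\subset I$. Take $R':=S/\m_N$. This is the whole point: $R'$ is a graded complete intersection (hence Gorenstein, so $E_{R'}(L)\cong R'$), and there is a ring surjection $R'\twoheadrightarrow R$. Matlis-dualizing this surjection over $S$ gives an inclusion $R\cong E_R(L)\hookrightarrow E_{R'}(L)\cong R'$ of $S$-modules, hence of $R'$-modules. Now Lemma~\ref{comparison} applies on the nose (with the ring $R'$, ideal $I/\m_N$, module $M=R$), giving that the $R$-module arc-closure of $0_R$ sits inside the $R'$-module arc-closure; Lemma~\ref{persistence} for the $R'$-linear inclusion $R\hookrightarrow R'$ and Theorem~\ref{graded} then kill the latter. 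Your vague ``quotient of a polynomial ring dual to a single form'' does not automatically come with a ring map to $R$, which is what makes Lemma~\ref{comparison} applicable---that is the piece you were looking for.
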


As stated in the Introduction, the condition on separability of $k\hookrightarrow L$ is just to ensure that for a complete local $k$-algebra with residue field $L$ we may choose a coefficient field containing $k$; this is sufficient but not necessary, as can be seen by taking $L$ to be an inseparable extension of $k$ and setting $R=L[[x]]$;  $k\subset L$ is inseparable, but clearly $R$ has a coefficient field containing $k$.

We note that the assumption on $k\hookrightarrow L$ is satisfied in particular when $k$ has characteristic 0 or is perfect of positive characteristic, or when $k=L$, and thus in the primary case of geometric interest for the embedded local isomorphism question.

\begin{proof}[Proof of Theorem~\ref{main}]
The first step is to reduce to the case where $\a $ is $\m $-primary:

\begin{lem}
Let $(R,\m)$ be a local $k$-algebra. Then
$$
\a \jc \ell  = \bigcap_n \,(\a +\m ^n)\jc \ell
$$
for any $\ell$, and thus
$$
\a \ac =\bigcap_n \,(\a +\m ^n)\ac.
$$
\end{lem}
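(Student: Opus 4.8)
The plan is to prove the first identity $\a\jc\ell = \bigcap_n (\a+\m^n)\jc\ell$ directly, and then deduce the arc-closure statement by intersecting over all $\ell$ using Proposition~\ref{comp}. For the inclusion $\subseteq$, observe that each $\a+\m^n$ contains $\a$, so by persistence of jet closures under the quotient $R\to R/\m^n$ (or more elementarily, since $\a_\ell \subset (\a+\m^n)_\ell$ inside $R_\ell$, hence $\a_\ell + \m R_\ell \subset (\a+\m^n)_\ell + \m R_\ell$) we get $\a\jc\ell \subseteq (\a+\m^n)\jc\ell$ for every $n$, giving one containment for free.

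The substantive direction is $\supseteq$: I must show that if $f\in R$ satisfies $(f)_\ell \subset (\a+\m^n)_\ell + \m R_\ell$ for \emph{every} $n$, then already $(f)_\ell \subset \a_\ell + \m R_\ell$. Here I would use that $\ell$ is finite, so $(f)_\ell$ is generated by the finitely many elements $D_0(f),\dots,D_\ell(f)$, and that $R_\ell$ is itself Noetherian when $R$ is — wait, this requires the Noetherian hypothesis, which is fine since Theorem~\ref{main} assumes it; but actually the cleaner route is to work in the finitely generated $R$-module $R_\ell/(\a_\ell+\m R_\ell)$ and note that $(\a+\m^n)_\ell + \m R_\ell$ is generated over $R_\ell$ by $\a_\ell$, $\m R_\ell$, and the finitely many $D_i(g)$ for $g$ ranging over generators of $\m^n$. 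The key point is that $D_i(\m^n)$ lands inside $\m^{\,n-i}R_\ell$ (a Leibniz-rule computation: $D_i$ of a product of $n$ elements of $\m$ is a sum of terms each a product of $n$ factors $D_{j}(\cdot)$ with indices summing to $i$, so at least $n-i$ of those factors are $D_0$ of elements of $\m$, hence lie in $\m R_\ell$). Therefore $(\a+\m^n)_\ell + \m R_\ell \subseteq \a_\ell + \m R_\ell + \m^{\,n-\ell}R_\ell$ for $n>\ell$, and so $\bigcap_n\bigl((\a+\m^n)_\ell+\m R_\ell\bigr) \subseteq \bigcap_n\bigl(\a_\ell + \m R_\ell + \m^m R_\ell\bigr)$ as $m\to\infty$.

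To finish I would apply Krull's intersection theorem in the Noetherian local ring $R_\ell$ (noting $R$ Noetherian implies $R_\ell = \HS^\ell_{R/k}$ Noetherian for finite $\ell$, and $\m R_\ell$ is contained in the Jacobson radical since $R_\ell$ is local with maximal ideal lying over $\m$—or, if one prefers to avoid locality of $R_\ell$, pass to the localization at the relevant maximal ideal, or simply invoke the Artin–Rees/Krull intersection statement for the f.g.\ module $R_\ell/(\a_\ell+\m R_\ell)$ over the Noetherian ring $R_\ell$ with respect to the ideal $\m R_\ell$ after checking the intersection is annihilated appropriately). This yields $\bigcap_m (\a_\ell + \m R_\ell + \m^m R_\ell) = \a_\ell + \m R_\ell$, completing the reverse inclusion. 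Then taking $\bigcap_{\ell\geq 0}$ of both sides of the jet identity and applying Proposition~\ref{comp} to the left side, and on the right side commuting the two intersections ($\bigcap_\ell \bigcap_n = \bigcap_n \bigcap_\ell$) and applying Proposition~\ref{comp} to each $\bigcap_\ell (\a+\m^n)\jc\ell = (\a+\m^n)\ac$, gives $\a\ac = \bigcap_n (\a+\m^n)\ac$.

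The main obstacle I anticipate is the bookkeeping in the Leibniz-rule estimate $D_i(\m^n) \subseteq \m^{\,n-i}R_\ell$ and ensuring the Krull intersection step is applied to the right object — in particular making sure the ambient ring in which we take the intersection is genuinely Noetherian local (which is where the Noetherian hypothesis on $R$, carried over from Theorem~\ref{main}, is essential; the lemma as stated only says ``local $k$-algebra,'' so I would either add Noetherianity to the lemma's hypotheses or note that in the application it holds). Everything else is a routine unwinding of the definition of $(-)_\ell$ together with persistence.
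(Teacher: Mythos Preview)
Your Leibniz-rule estimate is exactly the heart of the paper's argument, but you then take an unnecessary detour that introduces both a spurious Noetherianity hypothesis and an actual gap. Notice that your own computation already finishes the job: you show $D_i(\m^n)\subseteq \m^{\,n-i}R_\ell$, and for $n>\ell$ this exponent $n-i\geq n-\ell\geq 1$, so $\m^{\,n-i}R_\ell\subseteq \m R_\ell$. Hence for $n>\ell$ you have the \emph{equality}
\[
(\a+\m^n)_\ell+\m R_\ell \;=\; \a_\ell+(\m^n)_\ell+\m R_\ell \;=\; \a_\ell+\m R_\ell,
\]
not merely an inclusion up to $\m^{\,n-\ell}R_\ell$. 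Thus $(\a+\m^n)\jc\ell=\a\jc\ell$ on the nose for all $n>\ell$, the intersection over $n$ stabilizes, and no Krull intersection theorem is needed. This is precisely the paper's proof, and it is why the lemma is stated (and holds) for an arbitrary local $k$-algebra, with no Noetherian assumption at this stage.

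Your proposed Krull step, besides being unnecessary, does not go through as written: $R_\ell$ is \emph{not} local in general (already for $R=k[x]_{(x)}$ one has $R_\ell\cong R[x^{(1)},\dots,x^{(\ell)}]$, a polynomial ring), and $\m R_\ell$ need not lie in its Jacobson radical (e.g.\ the maximal ideal $(x x^{(1)}-1)$ in $R[x^{(1)}]$ avoids $x$). Your hedges about localizing or working in a suitable module could perhaps be made to work, but the point is moot once you observe that $\m^{\,n-\ell}R_\ell$ is already absorbed by $\m R_\ell$.
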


\begin{proof}
The second statement follows from the first due to the equality $\bigcap_\ell \a \jc \ell = \a \ac$ (see Proposition~\ref{comp}), since then
$$
\bigcap_n \,(\a +\m ^n)\ac =\bigcap_n \,\bigcap_\ell (\a +\m ^n)\jc \ell= 
\bigcap_\ell \,\bigcap_n (\a +\m ^n)\jc \ell= \bigcap_\ell \a  \jc \ell=\a \ac.
$$
Fix $\ell$. 
Clearly $\a \jc \ell \subset (\a +\m ^n)\jc \ell $ for all $n$ by monotonicity of the closure operation.
To see the other inclusion, note $(\a +\m ^n)\jc \ell=\a \jc \ell +(\m ^n)\jc \ell$. For $n>l$, though, the Leibniz rule says that $(\m ^n)\jc \ell\subset \m R_n$, so that $$\a \jc \ell+\m R_\ell=(\a +\m ^n)\jc \ell+\m R_\ell.$$ 
Thus $\a \jc \ell =(\a +\m ^n)\jc \ell$ for $n>\ell$, and the result follows.
\end{proof}

In the Noetherian case, then,
to see that $\a \ac=\a $
it suffices to show that $(\a +\m ^n)\ac= \a +\m ^n$, since then 
$$
\a\ac = \bigcap_n(\a+\m^n)\ac  = \bigcap_n \a+\m^n=\a,
$$
where the last equality follows by
Krull's intersection theorem.
 Equivalently by Lemma~\ref{hered}, we must show that the zero ideal is closed in any Artinian local $k$-algebra.
By induction, we may reduce further to the case of a Gorenstein Artinian local $k$-algebra:

\begin{lem}
If $(0_R)\ac=0_R$ for any Gorenstein Artinian local $k$-algebra $R$, the same is true for any Artinian local $k$-algebra.
\end{lem}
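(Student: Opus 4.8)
The plan is to induct on the length of the Artinian local $k$-algebra $R$. The base case (length $1$, i.e.\ $R = L$ a field) is clear: the map $\l_R$ is injective because $R$ is a field and $\l_R$ is a nonzero ring map, so $(0_R)\ac = 0_R$. For the inductive step, suppose $R$ is Artinian local with maximal ideal $\m$ and that the result holds for all Artinian local $k$-algebras of strictly smaller length. If $R$ is Gorenstein we are done by hypothesis, so assume $R$ is not Gorenstein. The idea is to exhibit $(0_R)$ as an intersection of ideals $N_i$ each of which has arc-closed analogue $0$ in the quotient $R/N_i$, where each $R/N_i$ has smaller length than $R$; then by Lemma~\ref{hered} (heredity) and the fact that arc closure commutes with finite intersections (which follows from $\a\ac = \bigcap_\ell \a\jc\ell$ together with the analogous statement for $\jc\ell$, or more simply from the definition via a kernel), we conclude $(0_R)\ac = 0_R$.

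The key observation is that a non-Gorenstein Artinian local ring has socle of dimension $\geq 2$ over $L$, so one can pick two distinct one-dimensional $L$-subspaces $S_1, S_2$ of the socle with $S_1 \cap S_2 = 0$. Setting $N_i = S_i$ (these are ideals, being inside the socle), we have $N_1 \cap N_2 = 0$, and each $R/N_i$ has length $\Length(R) - 1 < \Length(R)$. Since each $R/N_i$ is again an Artinian local $k$-algebra of smaller length, the inductive hypothesis gives $(0_{R/N_i})\ac = 0_{R/N_i}$, hence by Lemma~\ref{hered} we get $(N_i)\ac = N_i$ in $R$. Then
$$
(0_R)\ac = (N_1 \cap N_2)\ac \subset (N_1)\ac \cap (N_2)\ac = N_1 \cap N_2 = 0_R,
$$
where the inclusion uses monotonicity of the closure operation applied to $0_R \subset N_i$. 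This completes the induction.

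I expect the main (minor) obstacle to be making sure the reduction is stated cleanly: one must check that the socle of a non-Gorenstein Artinian local ring really has $L$-dimension at least $2$ (this is the standard characterization of Gorenstein Artinian rings as those with one-dimensional socle), and that a one-dimensional $L$-subspace of the socle is genuinely an ideal of $R$ (immediate, since $\m$ annihilates the socle, so any $L$-subspace of the socle is an $R$-submodule). One should also confirm that $R/N_i$ remains a local $k$-algebra with residue field $L$ — it does, since $N_i \subset \m$, so the residue field is unchanged and in particular is still separable over $k$, keeping us within the hypotheses needed downstream. The only genuinely substantive input beyond elementary commutative algebra is the heredity Lemma~\ref{hered} and the monotonicity of the closure, both already available.
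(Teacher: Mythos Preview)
Your proof is correct and is essentially the same induction-on-length argument as the paper's: both reduce the non-Gorenstein case by quotienting out one-dimensional subspaces of the socle and invoking the inductive hypothesis. The only cosmetic difference is that the paper fixes $f\in(0_R)\ac$ and finds a single socle element $g$ with $f\notin(g)$ to reach a contradiction, whereas you choose two independent socle lines $N_1,N_2$ upfront and use monotonicity to get $(0_R)\ac\subset N_1\cap N_2=0$; note also that your displayed chain only uses monotonicity, not the stronger ``commutes with finite intersections'' mentioned in your plan.
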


\begin{proof}
We induct on  $\Length(R)$. Say $f\in(0_R)\ac$.  
\smallbreak\noindent\emph{Case 1:}
Say there is $g\in \Soc R$ with $f\notin (g)=L\cdot g$, and consider the map $\pi\colon R\to R/(g)$.
Then by persistence of arc-closure under ring maps (Lemma~\ref{ringpersistence}) we have that $\pi\bigl((0_{R})\ac\bigr)\subset (0_{R/(g)})\ac$; since $g\in \Soc R$, though, we have $\Length(R/(g))=\Length(R)-1$, and thus by induction we know $(0_{R/(g)})\ac=0_{R/(g)}$. But then $\pi(f)=0$, so $f\in (g)$, contradicting our assumption, and thus $(0_R)\ac=0$.

\smallbreak\noindent\emph{Case 2:} There is no such $g\in \Soc R$, in which case we must have that $f$ itself generates the socle  of~$R$, and thus $R$ must be Gorenstein. In this case though $f=0$, by the assumption of the lemma.
\end{proof}

We're thus reduced to showing the zero ideal is arc-closed in a Gorenstein Artinian local $k$-algebra $R$ with residue field $L$ (since taking the quotient by an $\m $-primary ideal didn't change the residue field). 

By our assumption that $k\hookrightarrow L$ is separable and $R$ is an Artinian (hence complete) $k$-algebra with residue field $L$, there is a coefficient field $L_0\cong L$ contained in $ R$ containing $k$ (see \cite[Theorem 28.3]{Matsumura}).
By the Cohen structure theorem, such a ring $R$ can be written 
 as the quotient of $S=L_0[[x_1,\dots,x_n]]$ by an $(x_1,\dots,x_n)$-primary ideal $I\subset S$, and the $k$-algebra structure on $R$ is the same as the $k$-algebra structure on this quotient induced by the inclusion $k\hookrightarrow L_0$.
From now on, we omit the subscript on $L_0$ and simply write $L$.

%

Since $I$ is $(x_1,\dots,x_n)$-primary, there exists $N$ such that $\mN:=(x_1^N,\dots,x_n^N)\subset I$. 
Taking the surjection $S/\mN\to S/I$ and applying the Matlis duality functor $\Hom_S(-,E_S(L))$, 
where $E_S(L)$ is the injective hull of the residue field of $S$,
we obtain an inclusion
$$
\underbrace{\Hom_S(S/I,E_S(L))}_{E_{S/I}(L)} \hookrightarrow \underbrace{\Hom_S(S/\mN,E_S(L))}_{E_{S/\mN}(L)}.
$$
Now, since $S/I$  is assumed to be Gorenstein we have that $E_{S/I}(L)$ is isomorphic as an  $S$-module to $S/I$; likewise for the complete intersection $S/\mN\cong E_{S/\mN}(L)$,
so we have an inclusion of $S$-modules 
$$
S/I\hookrightarrow S/\mN;
$$
note that this is in fact an inclusion of $S/\mN$-modules.
Since $S/\mN$ is a graded local $k$-algebra, Theorem 5.8(a) of \cite{DEI} (appearing above as Theorem~\ref{graded}) implies that the zero ideal, viewed as a $S/\mN$-submodule is arc-closed.
But via our comparison lemma (Lemma~\ref{comparison}) we have that 
the arc-closure of $(0_{S/I})$ as an $S/I$-module is contained in the arc-closure of $(0_{S/I})$ as an $S/\mN$-module under the restriction of 
scalars along
$S/(x_1^N,\dots,x_n^M)\to S/I$.
Thus it suffices to show that this latter arc-closure is the zero ideal;
persistence of arc closure for the inclusion of $S/\mN$-modules
$
S/I\hookrightarrow S/\mN
$
gives
$$(0_{S/I})\ac\subset\bigl( 0_{S/\mN }\bigr)\ac=0$$
(with both sides taken as $S/\mN$-modules)
and thus $0_{S/I}\ac = 0$.
\end{proof}

\begin{cor}
Noetherian germs over perfect fields have
the embedded local isomorphism property; likewise if the residue field is already the same as $k$.
\end{cor}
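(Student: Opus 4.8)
The plan is to read this off from Theorem~\ref{main} together with \cite[Proposition~5.1]{DEI} (recalled above), which identifies the embedded local isomorphism property of a germ $\Spec R$ with the equality $(0_R)\ac = 0$.

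First I would dispatch the separability hypothesis of Theorem~\ref{main} in each of the two cases. Recall that a field $k$ is perfect precisely when every field extension of $k$ is separable; hence if $k$ is perfect, then for any Noetherian local $k$-algebra $(R,\m,L)$ the residue field extension $k\hookrightarrow L$ is separable. In the remaining case $L=k$, the residue field extension is the identity map, which is trivially separable. So in both situations Theorem~\ref{main} applies to $(R,\m,L)$.

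Next, for such an $R$ I would apply Theorem~\ref{main} to the ideal $\a=(0)$. Since $R$ is a local ring it is in particular nonzero, so $(0)$ is a proper ideal, and the theorem yields $(0_R)\ac = (0_R)$. By \cite[Proposition~5.1]{DEI} this says exactly that the germ $\Spec R$ has the embedded local isomorphism property, which is the assertion of the corollary.

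I do not expect any genuine obstacle here: the corollary is simply the specialization of Theorem~\ref{main} to the zero ideal, repackaged through the dictionary of \cite{DEI}. The only points worth spelling out are the two elementary field-theoretic facts above (perfect $k$ forces every extension to be separable; $L=k$ is trivially separable) and the remark that the zero ideal of a nonzero ring is proper, so that Theorem~\ref{main} is indeed applicable.
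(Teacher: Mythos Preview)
Your proposal is correct and matches the paper's intended argument: the corollary is stated without proof precisely because it is the specialization of Theorem~\ref{main} to $\a=(0)$, combined with \cite[Proposition~5.1]{DEI} and the separability observations the paper already records just before the proof of Theorem~\ref{main}. There is nothing to add.
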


\section{Further questions}
\label{questions}
Despite the triviality  of the arc-closures of ideals in this case, there are related potentially interesting questions:

\begin{rem}
There is another family of jet-theoretic closure operations appearing in \cite{DEI}, the \emph{jet support closures}, defined in terms of the reduced structure of the jet schemes. Explicitly, one can define
a ``reduced'' universal $\ell$-jet or arc via
$$
\bar\l_\ell\colon 
R\to (R_\ell/\m R_\ell)[t]/t^{\ell+1}\to (R_\ell/\m R_\ell)_\red[t]/t^{\ell+1}
$$
or 
$$
\bar\l_\infty\colon 
R\to (R_\infty/\m R_\infty)_\red[[t]].
$$
One then defines $(0)\jsc \ell = \ker \bar \l_\ell$, $(0)\jscn=\bigcap\ker\bar\l_\ell$, and  $(0)\asc =\ker \bar \l_\infty$; 
one can then set $\a \jsc \ell=\pi\inv((0_{R/\a })\jscn)$ and likewise for $a\jscn $ and $a\asc$.
For any ideal $\a $
there are inclusions
$$
\a \ac \subset \a  \jscn \subset \a \asc
$$
and 
$$
\a \jscn \subset \bar \a , 
$$
where $\bar \a  $ is the integral closure of $\a $.
It is shown in 
\cite{DEI} that $\bar \a  = \a \jscn $ for ideals inside a regular ring $R$, but that in a nonregular ring (even for a complete intersection) we may have $\a \jscn \subsetneq \bar \a $.

In contrast to the case for arc-closures, we note the inclusion $\a \jscn \subset \a \asc$ can in fact be proper: for example, if $R=k[x]/x^2$, then one can check explicitly that 
$$
R_\infty/\m R_\infty= k[x_1,x_2,\dots]/(x_1^2,2x_1x_2,2x_1x_3+2x_2^2,\dots),
$$
and that the quotient by the nilradical is just $k$, so that the kernel of $R\to (R_\infty/\m R_\infty)[[t]]$ is the maximal ideal $(x)$, i.e., $(0)\asc = (x)$.
In contrast, one can check that $x\notin (0)\jsc \ell$ for any $\ell$, and thus $(0)\jscn \subsetneq (x)=(0)\asc$.

This suggests that $\a \jscn$ may still be an interesting (and definitely nontrivial) closure operation in the Noetherian case, and provide a geometrically-motivated closure operation tighter than the integral closure in a nonregular ring.
\end{rem}

\begin{rem}
In this paper we introduced arc-closures of submodules to show that arc-closures of ideals are trivial, but it's possible such arc-closures of submodules are nontrivial and interesting. 
In particular, 
basechange $\Omega_{R/k}\mapsto \Omega_{R/k}\otimes_R R_\infty[[t]]$ along the universal arc  
is used in
\cite{differentials}
as part of the description of the K\"ahler differentials of the arc scheme; thus, examination of the map $M\to M\otimes_R R_\infty[[t]]$ may have an interpretation in similar contexts.
\end{rem}

\begin{rem}
For any Artinian $k$-algebra $A$, there is a scheme of $A$-jets $J_A(X)$, which represents the functor $T\mapsto \Hom(T\times_k A,X)$ on $k$-schemes. Given a $k$-algebra $R$, $J_A(\Spec R)$ will be affine, say $\Spec R_A$; functoriality then gives a universal $A$-jet $\l_A\colon  R\to R_A\otimes_ k A\to R_A/\m R_A\otimes_k A$. 
For more on this construction, see \cite{Mircea}.
Given a complete local ring $(C,\m )$, we can consider a family of Artinian rings $\set{A_\l}$ given by quotients of $C$ by various $\m $-primary ideals $\set{I_\l}$; one can then consider the ideal $\bigcap \ker \l_A$ of $R$, thought of as the $\set{A_\l}$-jet closure of $(0_R)$, and ask if for some suitably chosen $C$ and family of quotients $A_\l$ we obtain an interesting closure operation in this way. 
\end{rem}
\bibliographystyle{alpha}
\bibliography{link}
\end{document}